\documentclass[11pt]{amsart}
\usepackage{amssymb,amsthm,amsmath,epsfig,latexsym}
\usepackage{calc,times,verbatim,graphicx}
\usepackage{epstopdf}
\usepackage{geometry} 
\geometry{margin=1in} 
\begin{document}

\newcommand{\mmbox}[1]{\mbox{${#1}$}}
\newcommand{\proj}[1]{\mmbox{{\mathbb P}^{#1}}}
\newcommand{\Cr}{C^r(\Delta)}
\newcommand{\CR}{C^r(\hat\Delta)}
\newcommand{\affine}[1]{\mmbox{{\mathbb A}^{#1}}}
\newcommand{\Ann}[1]{\mmbox{{\rm Ann}({#1})}}
\newcommand{\caps}[3]{\mmbox{{#1}_{#2} \cap \ldots \cap {#1}_{#3}}}
\newcommand{\Proj}{{\mathbb P}}
\newcommand{\N}{{\mathbb N}}
\newcommand{\Z}{{\mathbb Z}}
\newcommand{\C}{{\mathbb C}}
\newcommand{\R}{{\mathbb R}}
\newcommand{\A}{{\mathcal{A}}}
\newcommand{\Tor}{\mathop{\rm Tor}\nolimits}
\newcommand{\Ext}{\mathop{\rm Ext}\nolimits}
\newcommand{\Hom}{\mathop{\rm Hom}\nolimits}
\newcommand{\im}{\mathop{\rm Im}\nolimits}
\newcommand{\rank}{\mathop{\rm rank}\nolimits}
\newcommand{\supp}{\mathop{\rm supp}\nolimits}
\newcommand{\arrow}[1]{\stackrel{#1}{\longrightarrow}}
\newcommand{\CB}{Cayley-Bacharach}
\newcommand{\coker}{\mathop{\rm coker}\nolimits}
\sloppy
\newtheorem{defn0}{Definition}[section]
\newtheorem{prop0}[defn0]{Proposition}
\newtheorem{quest0}[defn0]{Question}
\newtheorem{thm0}[defn0]{Theorem}
\newtheorem{lem0}[defn0]{Lemma}
\newtheorem{corollary0}[defn0]{Corollary}
\newtheorem{example0}[defn0]{Example}
\newtheorem{remark0}[defn0]{Remark}
\newtheorem{conj0}[defn0]{Conjecture}

\newenvironment{defn}{\begin{defn0}}{\end{defn0}}
\newenvironment{prop}{\begin{prop0}}{\end{prop0}}
\newenvironment{quest}{\begin{quest0}}{\end{quest0}}
\newenvironment{thm}{\begin{thm0}}{\end{thm0}}
\newenvironment{lem}{\begin{lem0}}{\end{lem0}}
\newenvironment{cor}{\begin{corollary0}}{\end{corollary0}}
\newenvironment{exm}{\begin{example0}\rm}{\end{example0}}
\newenvironment{rem}{\begin{remark0}\rm}{\end{remark0}}
\newenvironment{conj}{\begin{conj0}}{\end{conj0}}

\newcommand{\defref}[1]{Definition~\ref{#1}}
\newcommand{\propref}[1]{Proposition~\ref{#1}}
\newcommand{\thmref}[1]{Theorem~\ref{#1}}
\newcommand{\lemref}[1]{Lemma~\ref{#1}}
\newcommand{\corref}[1]{Corollary~\ref{#1}}
\newcommand{\exref}[1]{Example~\ref{#1}}
\newcommand{\secref}[1]{Section~\ref{#1}}
\newcommand{\remref}[1]{Remark~\ref{#1}}
\newcommand{\conjref}[1]{Conjecture~\ref{#1}}
\newcommand{\questref}[1]{Question~\ref{#1}}

\newcommand{\std}{Gr\"{o}bner}
\newcommand{\jq}{J_{Q}}


\title{On the geometry of real or complex supersolvable line arrangements}

\author{Benjamin Anzis and \c{S}tefan O. Toh\v{a}neanu}

\subjclass[2010]{Primary: 52C30; Secondary: 52C35, 05B35, 06C10.} \keywords{Dirac-Motzkin conjecture, Slope problem, supersolvable arrangements.\\
\indent Authors' addresses: Department of Mathematics, University of Idaho, Moscow, ID 83844, anzi4123@vandals.uidaho.edu, tohaneanu@uidaho.edu.}

\begin{abstract}
\noindent Given a rank 3 real arrangement $\mathcal A$ of $n$ lines in the projective plane, the Dirac-Motzkin conjecture (proved by Green and Tao in 2013) states that for $n$ sufficiently large, the number of simple intersection points of $\mathcal A$ is greater than or equal to $n/2$. With a much simpler proof we show that if $\mathcal A$ is supersolvable, then the conjecture is true for any $n$ (a small improvement of original conjecture). The Slope problem (proved by Ungar in 1982) states that $n$ non-collinear points in the real plane determine at least $n-1$ slopes; we show that this is equivalent to providing a lower bound on the multiplicity of a modular point in any (real) supersolvable arrangement. In the second part we find connections between the number of simple points of a supersolvable line arrangement, over any field of characteristic 0, and the degree of the reduced Jacobian scheme of the arrangement. Over the complex numbers even though the Sylvester-Gallai theorem fails to be true, we conjecture that the supersolvable version of the Dirac-Motzkin conjecture is true.
\end{abstract}
\maketitle

\section{Introduction}

Let $\A$ be a line arrangement in $\mathbb P^2$. Suppose $\ell_1,\ldots,\ell_n\in\mathbb K[x,y,z]$ are the defining equations of the lines of $\A$, and assume $\dim_{\mathbb K}Span(\ell_1,\ldots,\ell_n)=3$ (i.e.\ that $\A$ has full rank, equal to 3).

An intersection point $P$ of any two of the lines of $\A$ is a singularity, denoted by $P \in Sing(\A)$. The number of lines of $\A$ that intersect at a singular point $P \in Sing(\A)$ is called the {\em multiplicity} of $P$, denoted $m(P,\A)$, or just $m_P$ if the line arrangement is clear. The {\em multiplicity of $\A$}, denoted $m(\A)$, is $m(\A)=\max\{m_P \mid P\in Sing(\A)\}$. {\em Simple points} are singular points of multiplicity 2, and the set of such points in $\A$ will be denoted $Sing_2(\A)$.

In general, a hyperplane arrangement is {\em supersolvable} if its intersection lattice has a maximal chain of modular elements (\cite{St}). For the case of line arrangements $\A\subset \mathbb P^2$, supersolvability is equivalent to the existence of a $P\in Sing(\A)$ such that for any other $Q\in Sing(\A)$, the line connecting $P$ and $Q$ belongs to $\A$ (in other words, there is an intersection point $P$ that ``sees" all the other intersection points through lines of $\A$). Such a point $P$ will be called {\em modular}. Supersolvable hyperplane arrangements are an important class of hyperplane arrangements, capturing a great deal of topological (over $\mathbb C$ they are also known as fiber-type arrangements, see \cite{JaTe}), combinatorial, and homological information (see \cite{OrTe} for lots of information). Due to their highly combinatorial content, they are the best understood arrangements, yet there are some questions and problems like the ones we investigate here, which we feel deserve to be analyzed.

\medskip

The goals of these notes are to understand in a set-theoretical and combinatorial context the singularities of various types of supersolvable line arrangements; e.g. how many simple singularities such a divisor can have (and in general, how many singularities are there all together), or what is the multiplicity of a modular singularity. The driving forces behind our results are the following two classical problems about configuration of points in the real plane and the lines they determine. Since any two points determine a line, under the classical duality (points $\leftrightarrow$ lines) any two lines intersect at a point, so we are translating these problems into questions about line arrangements in the projective plane.

The {\em Dirac-Motzkin Conjecture} states that if $\A$ is a full rank real arrangement of $n$ lines in $\mathbb P^2$, then, for $n$ sufficiently large, $\lvert Sing_2(\A) \rvert \geq n/2$. This conjecture has been proven only recently by Ben Green and Terence Tao in \cite{GrTa}, where they also solve another famous related problem (the Orchard Problem). They present the unique class of examples, up to projective transformations, (called ``B\"{o}r\"{o}czky examples'', see \cite[Proposition 2.1 (i)]{GrTa}) for which the bound is attained (of course $n$ must be even). On a side note, there are only two known examples for which $\lvert Sing_2(\A) \rvert < n/2$ (see the brief history in Section 2); based on this, Gr\"{u}nbaum conjectured that if $n\neq 7, 13$, then $\lvert Sing_2(\A) \rvert \geq n/2$ (see \cite{Gr}).

$\bullet$ With a very short proof we show (see Theorem \ref{Main1}) that if $\A \subset \Proj^2$ is a full rank real supersolvable line arrangement, then $\lvert Sing_2(\A) \rvert \geq \frac{\lvert \A \rvert}{2}$. Though it is not stated in \cite{GrTa}, by a simple calculation one should remark that the B\"{o}r\"{o}czky examples are supersolvable line arrangements. We note here that our proof may give insights towards proving Dirac-Motzkin conjecture for complex supersolvable line arrangements (see the discussions in Section 3.2).

\medskip

In \cite{Sc}, Scott proposed the following {\em Slope Problem}: Given $n\geq 3$ points in the (real) plane, not all collinear, they determine at least $n-1$ lines of distinct slopes. The first to prove this conjecture was Peter Ungar in \cite{Un}, using a beautiful yet difficult argument.

$\bullet$ In Proposition \ref{equivalence} we show that the Slope problem is equivalent to showing that if $\A$ is any full rank real supersolvable line arrangement in $\mathbb P^2$, then $m(\A)\geq(|\A|-1)/2$. So providing an independent proof (that we don't have at the moment) of the statement about the supersolvable line arrangements will lead to an alternative proof of the slope problem.

\medskip

At the beginning of Section 3 the results presented are valid over any field of characteristic 0. We prove (Proposition \ref{bound}) a lower bound for $\lvert Sing_2(\A) \rvert$, when $\A$ is a supersolvable line arrangement, that involves $\lvert Sing(\A)\rvert$, and we remark that the bound is attained also by the B\"{o}r\"{o}czky examples. Next we give an example which shows the upper bound for the degree of the reduced Jacobian scheme of a supersolvable line arrangement obtained in \cite[Proposition 3.1]{To2} is sharp.

The remaining part of Section 3 is dedicated to analyzing the Dirac-Motzkin conjecture for complex supersolvable arrangements (see Conjecture \ref{conjecture}; basically the first bullet above with ``real'' replaced by ``complex'').

\medskip

Lots of our arguments are based on the following simple observation, which is derived from the fact that any two lines in a projective plane must intersect: Let $\A\subset\mathbb P^2$ be any arrangement of $n$ lines, over any field. Let $\ell\in\A$ be any line, and suppose it has exactly $s$ intersection points: $P_1,\ldots, P_s$. Then $$\sum_{i=1}^s(m_{P_i}-1)=n-1.$$

\section{Supersolvable line arrangements over the real numbers}

Let $\A$ be a full rank supersolvable line arrangement in $\mathbb P^2$. Suppose $\lvert \A \rvert=n$, and denote the multiplicity of $\A$ by $m:=m(\A)$. We begin by reproving \cite[Lemma 2.1]{To1}, which is a statement that is true when working over any field.

\begin{lem}\label{MaxModular}
Let $\A$ be a supersolvable line arrangement with a modular point $P$. Let $Q \in Sing(\A)$ not be modular. Then $$m(P,\A) > m(Q,\A).$$ Therefore, any intersection point of maximum multiplicity $m$ is modular.
\end{lem}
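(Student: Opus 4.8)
The plan is to use the key counting observation from the introduction: for any line $\ell \in \A$ with intersection points $P_1,\ldots,P_s$, we have $\sum_{i=1}^s (m_{P_i}-1) = n-1$. Let $P$ be a modular point with $m(P,\A) = k$, and let $Q \in Sing(\A)$ be a non-modular point with $m(Q,\A) = j$. Since $P$ is modular, the line $\overline{PQ}$ connecting $P$ and $Q$ belongs to $\A$; call it $\ell$. Then $Q \in \ell$ and $P \in \ell$, so both $P$ and $Q$ are among the intersection points of $\ell$.

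First I would apply the observation to the line $\ell = \overline{PQ}$. Since $P$ lies on $\ell$ and has multiplicity $k$, it contributes $k-1$ to the sum; since $Q$ lies on $\ell$ with multiplicity $j$, it contributes $j-1$; and every other intersection point of $\ell$ contributes at least $1$ (as each singular point has multiplicity $\geq 2$). Hence $n - 1 = \sum_i (m_{P_i}-1) \geq (k-1) + (j-1) + (\text{number of remaining intersection points of }\ell)$. This already gives $k + j \leq n+1$, which is not yet quite enough. The sharper input must come from the fact that $Q$ is \emph{not} modular: there exists some $R \in Sing(\A)$ such that the line $\overline{QR}$ is \emph{not} in $\A$. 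I would then argue that through $P$ (which sees everything) the lines $\overline{PR}$ and $\ell = \overline{PQ}$ are distinct lines of $\A$ through $P$, and the $j$ lines through $Q$ meet the pencil of lines through $P$ in a way that forces a strict inequality. Concretely: the $j$ lines through $Q$ are $j$ distinct lines; each of them meets $P$ at some point, but since $P$ is modular, each line through $Q$ other than $\ell$ hits the pencil through $P$ at a point distinct from $P$, and these are $j-1$ distinct intersection points on... — here one compares the $j-1$ lines through $Q$ (other than $\ell$) to lines through $P$, showing each such line through $Q$ accounts for a separate line through $P$, yielding $k \geq j$, and then the non-modularity of $Q$ upgrades this to a strict inequality $k > j$.

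The main obstacle will be extracting the \emph{strict} inequality rather than just $m(P,\A) \geq m(Q,\A)$. The clean way is: consider the line $\ell = \overline{PQ} \in \A$, and let $t$ be the number of intersection points on $\ell$ other than $P$ and $Q$. Each of the $j$ lines of $\A$ through $Q$ other than $\ell$ meets $\ell$ only at $Q$, so it must pass through $P$ or through one of these $t$ other points — but actually I want to count lines through $P$: the $j-1$ lines through $Q$ distinct from $\ell$, together with $\ell$ itself, together with the line $\overline{PR}$ witnessing non-modularity of $Q$ (which does not pass through $Q$, so is not among the previous $j$, and passes through $P$), all must be accounted for among the $k$ lines through $P$. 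This requires showing $\overline{PR}$ passes through $P$ and is genuinely new; then $k \geq (j-1) + 1 + 1 = j+1 > j$. I would need to double check the edge case where $R$ lies on $\ell$ or coincides with one of the existing intersection points, handling it by choosing $R$ appropriately or by a small separate argument. The final sentence of the lemma ("any intersection point of maximum multiplicity $m$ is modular") is then immediate: if $Q$ had $m(Q,\A) = m$ but were not modular, the inequality $m(P,\A) > m$ would contradict the maximality of $m = m(\A)$.
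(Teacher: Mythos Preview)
Your proposal has the right shape --- use a witness $R$ of the non-modularity of $Q$ and count lines through $P$ --- but the core step is not justified. You write that ``the $j-1$ lines through $Q$ distinct from $\ell$, together with $\ell$ itself, together with the line $\overline{PR}$ \ldots\ all must be accounted for among the $k$ lines through $P$''. The $j-1$ lines through $Q$ other than $\ell=\overline{PQ}$ do \emph{not} pass through $P$, so they are not themselves among the $k$ lines through $P$; and you give no injection from these $j-1$ lines into the pencil through $P$. Saying that each such line ``hits the pencil through $P$ at a point distinct from $P$'' does not help: in $\mathbb P^2$ every line meets every line through $P$, so this singles out nothing. The counting formula on $\ell=\overline{PQ}$ that you start with only yields $k+j\le n+1$, which, as you note, is not what is needed.

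The missing mechanism --- and this is exactly what the paper supplies --- is an auxiliary line that avoids both $P$ and $Q$. Take $R\in Sing(\A)$ not lying on any line of $\A$ through $Q$ (this is what non-modularity of $Q$ gives). Since $R\in Sing(\A)$, there is a line $\ell''\in\A$ through $R$ with $P\notin\ell''$; and $Q\notin\ell''$ since $R$ lies on no line through $Q$. Now intersect $\ell''$ with the $j$ lines through $Q$: you get $j$ distinct singular points $R_1,\dots,R_j$ on $\ell''$. Modularity of $P$ gives lines $\overline{PR_i}\in\A$, pairwise distinct because they meet $\ell''$ in distinct points and $P\notin\ell''$. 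Finally $\overline{PR}\in\A$ is a further line through $P$, distinct from all $\overline{PR_i}$ since $R\ne R_i$. Hence $m(P,\A)\ge j+1>j=m(Q,\A)$. Your edge-case worry (``$R$ lies on $\ell$'') disappears once you work on $\ell''$ rather than on $\overline{PQ}$. The final sentence of the lemma follows exactly as you say.
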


\begin{proof}
Let $s = m(Q,\A)$ and let the lines passing through $Q$ be $\{\ell_1, \ldots, \ell_s\}\subset \A$. Since $Q$ is not modular, there exists a point $P'\in Sing(\A)$ with $P'\notin \ell_i,i=1,\ldots,s$. Because $P$ is modular, there is a line $\ell_{PP'}\in\A$ through $P$ and $P'$. Since $P'\in Sing(\A)$ there is another line $\ell\in\A$ through $P'$ and not passing through $P$. $\ell$ intersects the $\ell_i$'s in $s$ points, say $\{P_1, \ldots, P_s\}\subset Sing(\A)$. Since $P$ is modular, there are lines $\ell'_i\in\A$ through $P$ and $P_i$. Counting the number of lines through $P$ we get $m(P,\A)\geq s+1>s=m(Q,\A)$.

Let $D\in Sing(\A)$ be such that $m(D,\A)=m$. If $D$ is not modular, let $P$ be a modular point of $\A$. Then we have $m(P,\A)>m(D,\A)=m$, contradicting the maximality of $m$.
\end{proof}

\subsection{Dirac-Motzkin conjecture for real supersolvable arrangements}

In \cite{Sy}, Sylvester proposed the following problem: if $\A$ is a full rank real line arrangement in $\mathbb P^2$, then $Sing_2(\A) \neq \emptyset$. In 1944, Gallai solved this problem (see \cite{Ga}), which is now known in the literature as Sylvester-Gallai Theorem.

Dirac and Motzkin conjectured that if $\A$ is a full rank real arrangement of $n$ lines in $\mathbb P^2$, then for $n\geq n_0$ one has $\lvert Sing_2(\A) \rvert \geq n/2$. The existence of the absolute constant $n_0$ is justified in part by the results of Kelly and Moser (\cite[Theorem 3.6]{KeMo}) and Csisma and Sawyer (\cite[Theorem 2.15]{CsSa}) who proved $\lvert Sing_2(\A) \rvert \geq 3n/7$, and $\lvert Sing_2(\A) \rvert \geq 6n/13$ if $n>7$, respectively, as well as by the examples where these two bounds are attained. For $n=7$, the non-Fano arrangement (\cite[Figure 3.1]{KeMo} or \cite[Fig. 3]{CsSa}) has $\lvert Sing_2(\A) \rvert =3$, which clearly fails to be $\geq n/2$; a more complicated example of Crowe and McKee (\cite{CrMc} or \cite[Fig. 4]{CsSa}) is an arrangement of $n=13$ lines with $\lvert Sing_2(\A) \rvert =6$, so that again $6 \ngeq n/2$. Hence we must have $n_0\geq 14$.

The next lemma is crucial to the proof of the main result. The condition that $\mathbb R$ is our base field is necessary for the proof.

\begin{lem}\label{NonmodularLines} Let $\A$ be a full rank supersolvable real line arrangement in $\mathbb P^2$. Let $P\in Sing(\A)$ be a point of max multiplicity (and hence, by Lemma \ref{MaxModular}, a modular point). Then, any line of $\A$ not passing through $P$ has at least one simple singularity.
\end{lem}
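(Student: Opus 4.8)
The plan is to work directly with the combinatorial structure around the modular point $P$. Let $\ell$ be a line of $\A$ not passing through $P$, and suppose for contradiction that every singularity of $\A$ lying on $\ell$ has multiplicity at least $3$. Since $P$ is modular, for each intersection point $Q$ on $\ell$ there is a line of $\A$ through $P$ and $Q$; distinct points on $\ell$ give distinct such lines, so if $\ell$ meets exactly $s$ other points then $m(P,\A)\ge s$. On the other hand, applying the basic observation at the end of the introduction to the line $\ell$ itself gives $\sum_{i=1}^s (m_{P_i}-1)=n-1$, where the $P_i$ are the intersection points on $\ell$. Under the contradiction hypothesis each $m_{P_i}-1\ge 2$, which forces $s\le (n-1)/2$, hence $m(P,\A)\le$ roughly $(n-1)/2 + 1$; I expect this to be the wrong sign against $m=m(P,\A)$, so the real work is to produce a sharper obstruction, and this is where the real-field hypothesis must enter.

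The key idea, and where $\mathbb R$ is essential, is an ordering argument in the spirit of the slope-problem / Sylvester–Gallai proofs. Project from $P$: the lines of $\A$ through $P$ are $m$ lines $L_1,\ldots,L_m$, and in the real projective plane they occur in a cyclic order; choose two of them, say $L_1$ and $L_2$, that are \emph{cyclically adjacent} among the lines through $P$ (no other line of $\A$ through $P$ lies in the angular sector between them). Now consider the line $\ell$ not through $P$. It meets each $L_j$ in a point $Q_j$, and because $P$ is modular, $Q_j\in Sing(\A)$ for every $j$. I would look at the two points $Q_1$ and $Q_2$ where $\ell$ crosses the two adjacent lines $L_1,L_2$, and argue that at least one of them is simple: any line of $\A$ through $Q_1$ other than $L_1$ and $\ell$ would, together with the angular-adjacency of $L_1$ and $L_2$ at $P$, force an intersection point inside the ``forbidden'' sector, or else a line of $\A$ through $P$ between $L_1$ and $L_2$ — contradicting adjacency. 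Concretely: pick the segment of $\ell$ between $Q_1$ and $Q_2$; a third line through $Q_1$ enters the triangle $PQ_1Q_2$, and where it exits (through $\ell$ or through $L_2$) one locates a new singularity that, via modularity of $P$, yields a line through $P$ strictly between $L_1$ and $L_2$.

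The main obstacle is making the ``adjacent lines through $P$'' and ``inside the triangle/sector'' argument precise in $\mathbb P^2$ rather than $\mathbb A^2$: one must choose an affine chart (equivalently, a line at infinity) so that $P$ and the relevant portion of $\ell$ behave like points in the Euclidean plane, and so that ``cyclically adjacent'' is well-defined. A clean way to do this is to move a line of $\A$ through $P$ to infinity, or to pick the line at infinity disjoint from the finite configuration we care about, after which the $L_j$ become a pencil of concurrent lines with a genuine angular order and $\ell$ is a transversal. Once the chart is fixed, the combinatorial core is short; I would state it as: among the $m$ points $Q_1,\ldots,Q_m$ in which $\ell$ meets the lines through $P$, the two that are extreme along $\ell$ (equivalently, lie on the two boundary lines of the smallest sector at $P$ containing all of $\A$) cannot both fail to be simple, and this immediately gives the claim. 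I'd also double-check the degenerate case where $\ell$ passes through only the $Q_j$'s and no other points, since then the count $\sum (m_{Q_j}-1)=n-1$ already pins down the multiplicities and the simple point may be forced directly.
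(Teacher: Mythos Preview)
Your instinct to exploit the real ordering around $P$ is correct, but neither of the two localisation strategies you propose actually works. The ``adjacent pair'' claim --- that for two cyclically adjacent lines $L_1,L_2$ through $P$, at least one of $Q_1=\ell\cap L_1$, $Q_2=\ell\cap L_2$ is simple --- is false: take $P$ at infinity with vertical lines $x=0,1,2$ through $P$, let $\ell$ be the $x$-axis, and add the lines $y=x$ and $y=2(x-1)$. Then $P$ is modular of maximal multiplicity $3$, and on $\ell$ the adjacent points $(0,0)$ and $(1,0)$ are both triple while $(2,0)$ is simple. Your reasoning breaks because the third line through $Q_1$ (here $y=x$) does enter the strip between $L_1$ and $L_2$, but it exits on $L_2$ itself, so modularity only returns the line $L_2$, not a new line strictly between $L_1$ and $L_2$. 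The ``extreme pair'' claim also fails: with verticals $x=0,1,2,3$, $\ell$ the $x$-axis, and extra lines $y=x$ and $y=-2(x-3)$, one checks that $P$ is modular of maximal multiplicity $4$, the extreme points $(0,0)$ and $(3,0)$ on $\ell$ are both triple, and the interior points $(1,0),(2,0)$ are simple.

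The paper's argument assumes \emph{every} $P_i$ on $\ell$ is non-simple and works globally with the slopes $a_i$ of one chosen extra line $\ell_i$ through each $P_i$ (after normalising $P$ to infinity and $\ell$ to a horizontal line, so each $a_i\ne 0$). If all $a_i$ share a sign, then $\ell_1\cap\ell_m$ lies strictly to the left of all the verticals or strictly to the right of all of them; otherwise there is some adjacent index pair $(j-1,j)$ at which $a_{j-1}$ and $a_j$ have opposite signs, and then $\ell_{j-1}\cap\ell_j$ has $x$-coordinate strictly between $c_{j-1}$ and $c_j$, again off every vertical. Either way modularity of $P$ is contradicted. The essential difference from your plan is that the pair of extra lines to intersect is dictated by the \emph{slopes} $a_i$, not merely by the positions of the $P_i$ along $\ell$; fixing the pair in advance (adjacent or extreme) cannot work, as the counterexamples above show.
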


\begin{proof}
Define $m = m(P,\A)$. After a linear change of variables, we may assume that $P=[0,0,1]$, so that the lines passing through $P$ are parallel and vertical. Suppose for contradiction that there exists $\ell\in\A$ with $P\notin\ell$, and $\ell\cap Sing_2(\A)=\emptyset$. Since $P$ is modular, we have $\ell \cap Sing(\A) = \{P_1, \ldots, P_m\}$ where each $P_i$ lies at the intersection of $\ell$ and a line through $P$, say $\ell_i'$. We may assume that the $P_i$'s are ordered from left to right (i.e.\ from least homogenized first coordinate to largest).

For each $i$, let $\ell_i \in \A$ be another line through $P$ different from $\ell$ and $\ell_i'$. Note that the $\ell_i$ are distinct, since $\ell$ is the unique line passing through the $P_i$.

Let $a_i$ be the slope of $\ell_i$ for each $i$. Note that each $a_i$ is finite, since $P = [0,0,1]$. We may assume that $a_1 \geq 0$.

Consider the case that all the $a_i$ have the same sign. We may assume that all $a_i > 0$. Let $Q$ be the intersection point of $\ell_1$ and $\ell_m$. If $Q$ lies ``above" $\ell$, then $Q$ necessarily lies to the ``right" of $\ell_m$ and hence, since $a_m \geq 0$, to the ``right" of any of the $\ell_i'$. Similarly, if $Q$ lies ``below" $\ell$, then $Q$ necessarily lies to the ``left" of $\ell_1$ and hence, since $a_1 \geq 0$, to the ``left" of any of the $\ell_i'$. In either case, $Q$ is not on a line through $P$, contradicting that $P$ is modular.

In the remaining case, we may assume that $a_m > 0$. Let $j$ be the least index in which $a_j < 0$, and let $Q$ be the intersection point of $\ell_{j-1}$ and $\ell_j$. Then $Q$ must lie ``between" $\ell_{j-1}'$ and $\ell_j'$, and so cannot lie on a line through $P$. This again contradicts that $P$ is modular.
\end{proof}

\begin{cor}\label{Corollary}
Let $\A$ be a full rank supersolvable real line arrangement in $\Proj^2$. Then $$\lvert Sing_2(\A)\rvert + m(\A) \geq \lvert \A \rvert.$$
\end{cor}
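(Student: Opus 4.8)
The plan is to combine the two lemmas just established with the basic counting identity highlighted at the end of the introduction. Let $P \in Sing(\A)$ be a point of maximum multiplicity $m = m(\A)$; by Lemma \ref{MaxModular}, $P$ is modular. There are exactly $m$ lines of $\A$ through $P$, hence $n - m = |\A| - m$ lines of $\A$ not passing through $P$. Call these lines $\ell_1, \ldots, \ell_{n-m}$. By Lemma \ref{NonmodularLines}, each $\ell_j$ contains at least one simple point of $\A$, say $Q_j \in Sing_2(\A) \cap \ell_j$.

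First I would like to claim that the points $Q_1, \ldots, Q_{n-m}$ are pairwise distinct, which would immediately give $|Sing_2(\A)| \geq n - m$, i.e.\ $|Sing_2(\A)| + m(\A) \geq |\A|$. The reason is that a simple point $Q$ lies on exactly two lines of $\A$; since $P$ is modular, the line $\ell_{PQ}$ joining $P$ and $Q$ is one of those two lines and passes through $P$, so \emph{at most one} of the two lines through $Q$ can be among $\ell_1, \ldots, \ell_{n-m}$ (the lines avoiding $P$). Therefore each simple point $Q_j$ is the chosen simple point of at most one $\ell_j$, and the assignment $j \mapsto Q_j$ is injective.

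The one point that needs a moment of care is the degenerate possibility that \emph{no} line avoids $P$, i.e.\ $m = n$; but then every line passes through $P$, the arrangement is a pencil, it has full rank $3$ only if $n \le 2$ — excluded — so in the full-rank case $n - m \ge 1$ and there is at least one line to which Lemma \ref{NonmodularLines} applies, and the counting above is vacuously fine when interpreted correctly. I expect the main (minor) obstacle to be exactly this bookkeeping around injectivity: one must make sure that the ``at least one simple point on each non-modular line'' produced by Lemma \ref{NonmodularLines} genuinely yields $n - m$ \emph{different} simple points, and the modularity of $P$ is precisely what forces that, as sketched above. Everything else is a direct substitution: $|Sing_2(\A)| \ge n - m = |\A| - m(\A)$, which is the asserted inequality.
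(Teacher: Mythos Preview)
Your argument is correct and is essentially the paper's own proof: count the $n-m$ lines of $\A$ avoiding the modular point $P$, invoke Lemma~\ref{NonmodularLines} to put a simple point on each, and use modularity of $P$ to see that these simple points are pairwise distinct (each being the intersection of exactly one line through $P$ and one line not through $P$). The digression on the degenerate case $m=n$ is unnecessary---the inequality $\lvert Sing_2(\A)\rvert \ge n-m$ is vacuous there---but harmless.
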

\begin{proof} There are exactly $\lvert \A \rvert-m(\A)$ lines of $\A$ not passing through $P$, and, by Lemma \ref{NonmodularLines}, each of these must have a simple singularity of $\A$ on it. Because $P$ is modular, such a simple point can occur only as the intersection of a line of $\A$ through $P$ and of a line of $\A$ not through $P$. Therefore $\lvert Sing_2(\A) \rvert\geq \lvert \A \rvert-m(\A)$.
\end{proof}

\begin{thm}\label{Main1}
Let $\A$ be a full rank supersolvable real line arrangement in $\mathbb P^2$. Then $$\lvert Sing_2(\A) \rvert \geq\frac{\lvert \A \rvert}{2}.$$
\end{thm}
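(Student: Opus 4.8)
The plan is to compare two lower bounds for $\lvert Sing_2(\A)\rvert$ according to the size of $m:=m(\A)$. Fix an intersection point $P$ of maximum multiplicity $m$, which is modular by Lemma \ref{MaxModular}, and put $n:=\lvert\A\rvert$; since $\A$ has full rank it is not a pencil, so there are $n-m\geq 1$ lines of $\A$ not through $P$. If $m\leq n/2$ there is nothing to do: Corollary \ref{Corollary} already gives $\lvert Sing_2(\A)\rvert\geq n-m\geq n/2$. So the real content is the regime $m>n/2$, where Corollary \ref{Corollary} is too weak, and I would proceed as follows (the arrangements of at most $3$ lines, where $P$ could still be simple, are trivial and I would dispose of them by hand first; thus below $m\geq 3$, so $P$ is not a simple point).

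First I would record a structural fact: every simple point $Q$ of $\A$ lies on exactly one line of $\A$ not through $P$. Indeed $Q\neq P$, and since $P$ is modular the line $\overline{PQ}$ belongs to $\A$; it is one of the two lines of $\A$ through $Q$, and the other line through $Q$ cannot pass through $P$, for it would then coincide with $\overline{PQ}$. Hence the simple points are partitioned by the lines of $\A$ not through $P$, so $\lvert Sing_2(\A)\rvert=\sum_{P\notin k}\lvert Sing_2(\A)\cap k\rvert$, the sum over the $n-m$ such lines $k$. Next I would bound $\lvert Sing_2(\A)\cap k\rvert$ from below for a single line $k$ with $P\notin k$: modularity of $P$ forces $k\cap Sing(\A)$ to be exactly the $m$ points $R_1,\dots,R_m$ where $k$ meets the $m$ lines through $P$ (distinct, since two lines through $P$ meet only at $P\notin k$), and then the displayed identity $\sum_{i=1}^m(m_{R_i}-1)=n-1$ from the end of Section 1, together with $m_{R_i}\geq 2$, yields $n-1\geq t+2(m-t)$ if $t$ of the $R_i$ are simple, i.e.\ at least $2m-n+1$ of them are simple. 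Multiplying by the number $n-m$ of lines not through $P$ then gives $\lvert Sing_2(\A)\rvert\geq(n-m)(2m-n+1)$.

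Finally I would verify the elementary inequality $(n-m)(2m-n+1)\geq n/2$ in the range $m>n/2$: writing $a=n-m$ (so $1\leq a<n/2$), this reads $a(n-2a+1)\geq n/2$, and $a\mapsto a(n-2a+1)$ is concave on $[1,n/2]$ with value $n-1\geq n/2$ at $a=1$ and value $n/2$ at $a=n/2$, hence stays $\geq n/2$ throughout. The main obstacle I anticipate is precisely the passage to the regime $m>n/2$: a single line avoiding $P$ contributes only $2m-n+1$ simple points, which is far below $n/2$, so the crucial — and slightly delicate — step is to exploit \emph{all} $n-m$ lines avoiding $P$ at once and to be certain (via modularity of $P$) that their simple points are genuinely distinct, so that the per-line counts add up instead of overlapping; the rest is bookkeeping.
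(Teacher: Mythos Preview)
Your proof is correct and follows the same overall strategy as the paper's: split on whether $m\leq n/2$ (where Corollary~\ref{Corollary} already gives $\lvert Sing_2(\A)\rvert\geq n-m\geq n/2$) or $m>n/2$ (where one counts simple points on the $n-m$ lines avoiding $P$, using that these lines partition $Sing_2(\A)$ since each simple point lies on exactly one of them).

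The execution of the second case, however, is tidier than the paper's. You extract the uniform per-line lower bound $t\geq 2m-n+1$ directly from $\sum_i(m_{R_i}-1)=n-1$ and then dispose of $(n-m)(2m-n+1)\geq n/2$ by a one-line concavity argument. The paper instead subdivides the range $m>n/2$ into a cascade of intervals $\tfrac{2u-3}{2u-2}\,n<m\leq\tfrac{2u-1}{2u}\,n$ and argues inductively that each line off $P$ carries at least $u$ simple points, which is the same bound unpacked piecewise. Your route also makes visible something the paper's presentation obscures: in the regime $m>n/2$ your argument never invokes Lemma~\ref{NonmodularLines}, so the real hypothesis enters \emph{only} through Corollary~\ref{Corollary} in Case~1. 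This is pertinent to the paper's later discussion of Conjecture~\ref{conjecture}, since it shows the complex case reduces entirely to the regime $m\leq n/2$.

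One small wording slip: where you write ``together with $m_{R_i}\geq 2$'' you really mean that the \emph{non-simple} $R_i$ satisfy $m_{R_i}\geq 3$, hence $m_{R_i}-1\geq 2$; the inequality $n-1\geq t+2(m-t)$ you go on to write is correct.
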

\begin{proof} Let $n:=\lvert \A \rvert$, and let $m:=m(\A)$. Let $P\in Sing(\A)$ with $m(P,\A)=m$. Then, by Lemma \ref{MaxModular}, $P$ is a modular point.

\medskip

\noindent {\em Case 1.} Suppose $m\leq n/2$. Then $n-m\geq n/2$, and from Corollary \ref{Corollary} we have $\lvert Sing_2(\A) \rvert\geq n/2$.

\medskip

\noindent {\em Case 2.} Suppose $m>n/2$. In general, any $\A\subset\mathbb P^2$ with each line containing a simple singularity has $\lvert Sing_2(\A) \rvert\geq \lvert \A \rvert/2$. Suppose that there is $\ell\in\A$ with no simple singularity on it. From Lemma \ref{NonmodularLines} we necessarily have that $P\in\ell$.

Let $\ell'\in\A$ with $P\notin\ell'$. Let $\{Q\}=\ell\cap\ell'$. Then $m(Q,\A)\geq 3$.

Suppose $\ell'\cap Sing(\A)=\{Q_1,\ldots,Q_{m-1},Q_m\}$, with $Q_m=Q$, and $m(Q_1,\A)=2$ (from Lemma \ref{NonmodularLines}). Denote $n_i:=m(Q_i,\A), i=1,\ldots,m$. It is clear that

\begin{equation} \label{Bezout}
\underbrace{(n_1-1)}_{1}+(n_2-1)+\cdots+(n_m-1)=n-1.\,
\end{equation}

{\em Case 2.1.} Suppose $3n/4\geq m>n/2$. If $n_i\geq 3$, for all $i=2,\ldots,m$, then, from (\ref{Bezout}) above $$n-2\geq 2(m-1)$$ giving $n\geq 2m$; contradiction.d Hence we can assume $n_2=2$ as well, so that $\ell'$ has at least two simple singularities on it. This leads to $$\lvert Sing_2(\A) \rvert\geq 2(n-m)\geq 2n-3n/2=n/2.$$

{\em Case 2.2.} Suppose $5n/6\geq m>3n/4$. From the previous case we can assume $n_1=n_2=2$. Suppose $n_i\geq 3$, for all $i=3,\ldots,m$. Then, from (\ref{Bezout}), $$n-3\geq 2(m-2)$$ giving $(n+1)/2\geq m$. This contradicts $m>3n/4$ and the fact that $n\geq 3$ ($\A$ has rank 3). So we can assume $n_3=2$ as well, which means that $\ell'$ has at least three simple singularities on it. This leads to $$\lvert Sing_2(\A) \rvert\geq 3(n-m)\geq 3n-5n/2=n/2.$$

{\em Case 2.u-1.} Suppose $\frac{2u-1}{2u}\cdot n\geq m>\frac{2u-3}{2u-2}\cdot n$, where $u$ is some integer $>3$. From the inductive hypothesis (since $m>\frac{2u-3}{2u-2}\cdot n>\frac{2(u-1)-3}{2(u-1)-2}\cdot n$) we can assume $n_1=\cdots=n_{u-1}=2$. Suppose $n_i\geq 3$, for all $i=u,\ldots,m$. Then, from (\ref{Bezout}), $$n-u\geq 2(m-u+1)$$ giving $(n+u-2)/2\geq m$.

We have $n-1\geq m$, from the full rank hypothesis. Then $n-1>\frac{2u-3}{2u-2}\cdot n$, which gives $n>2u-2$. The inequality $(n+u-2)/2\geq m$ obtained before, together with $m>\frac{2u-3}{2u-2}\cdot n$, leads to $u-1>n$. But we previously obtained $n > 2u-2$, a contradiction. Hence we can assume $n_u=2$ as well, which means that $\ell'$ has at least $u$ simple singularities on it. This leads to $$\lvert Sing_2(\A) \rvert\geq u(n-m)\geq un-(2u-1)n/2=n/2.$$

\end{proof}

\begin{exm}\label{Example1} For $m\geq 3$, the B\"{o}r\"{o}czky configuration of points is $$X_{2m}:=\underbrace{\{[\cos\frac{2\pi j}{m},\sin\frac{2\pi j}{m},1]:0\leq j<m\}}_{\Lambda_1}$$ $$\cup \underbrace{\{[-\sin\frac{\pi j}{m},\cos\frac{\pi j}{m},0]:0\leq j<m\}}_{\Lambda_2}.$$ Let $\A_{2m}\subset\mathbb P^2$ be the real arrangement of the $2m$ lines dual to the points of $X_{2m}$.

The calculations done in the proof of \cite[Proposition 2.1 (i)]{GrTa} show that $\A_{2m}$ is supersolvable. The dual lines to the points of $\Lambda_2$ all pass through the point $P:=[0,0,1]$, and any two lines dual to two distinct points $[\cos\frac{2\pi j}{m},\sin\frac{2\pi j}{m},1], [\cos\frac{2\pi j'}{m},\sin\frac{2\pi j'}{m},1]\in\Lambda_1$ intersect in a point that belongs to the line with equation $$-\sin\frac{\pi (j+j')}{m}\cdot x+\cos\frac{\pi (j+j')}{m}\cdot y=0.$$ Since $0\leq j,j'\leq m-1$, then $0<j+j'\leq 2m-3$.

If $j+j'\leq m-1$, then the above line is in $\A_{2m}$, and also passes through $P$.

If $j+j'\geq m$, then consider $k:=j+j'-m$, which satisfies $0\leq k\leq m-3<m$. Trig identities $$\sin\frac{\pi (j+j')}{m}=-\sin\frac{\pi k}{m}, \cos\frac{\pi (j+j')}{m}=-\cos\frac{\pi k}{m}$$ give also that the line of equation listed above is in $\A_{2m}$.

Everything put together shows that $P$ is a modular point (of maximum multiplicity $m$), so $\A_{2m}$ is supersolvable and attains the bound of our Theorem \ref{Main1}.
\end{exm}

At large, \cite[Theorem 2.2]{GrTa} shows the uniqueness of the B\"{o}r\"{o}czky examples; the proof is quite challenging. For our specific case of supersolvable arrangements the proof is more intuitive and we believe simpler.

\begin{thm}\label{unique} Let $\A$ be a supersolvable, real line arrangement with $m:=m(\A)\geq 3$ and $\lvert \A \rvert = 2m$. Then, $\lvert Sing_2(\A) \rvert=m$ if and only if $\A$ and $\A_{2m}$ are combinatorially equivalent (i.e., they have isomorphic Orlik-Solomon algebras).
\end{thm}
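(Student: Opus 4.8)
The plan is to prove the forward direction---that $|Sing_2(\A)| = m$ forces combinatorial equivalence with $\A_{2m}$---by extracting rigid structural consequences from equality in the chain of inequalities that produced Theorem~\ref{Main1}, and then reconstructing the intersection lattice line by line. Let $P$ be a modular point of maximum multiplicity $m$; by Lemma~\ref{MaxModular} it exists. Since $|\A| = 2m$, we are in the boundary case of Corollary~\ref{Corollary} ($|Sing_2(\A)| \geq |\A| - m(\A) = m$), and equality $|Sing_2(\A)| = m$ means every one of the $m$ lines through $P$ meets the set of $m$ lines \emph{not} through $P$ in exactly one simple point, and dually each of the $m$ lines avoiding $P$ carries exactly one simple singularity of $\A$. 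First I would argue that $m = m(\A)$ together with Lemma~\ref{MaxModular} and the counting identity $\sum_{i=1}^s (m_{P_i} - 1) = n-1$ pins down the multiplicity profile along every line: a line $\ell$ through $P$ other than $P$ itself passes through $P$ (multiplicity $m$) plus some points of total ``excess'' $2m - 1 - (m-1) = m$, and the simple-point count forces exactly one of these to be a simple point and the remaining $m-1$ to be double points on the $m-1$ other lines through $P$... i.e.\ every non-modular singularity has multiplicity exactly $3$ except possibly along special lines. Chasing this bookkeeping should show that $\A$ has exactly $m$ simple points, a bundle of $m$ concurrent lines through $P$, and that all remaining intersection points have multiplicity $3$, which is precisely the combinatorial skeleton of $\A_{2m}$.

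Next I would set up the correspondence with the B\"or\"oczky model explicitly. Fix coordinates so $P = [0,0,1]$ and the $m$ lines through $P$ are vertical; the remaining $m$ lines have finite slopes $a_1 < \cdots < a_m$ (distinctness follows because two such lines meeting on a vertical line through $P$ would create a non-modular point off all lines through $P$, exactly as in the proof of Lemma~\ref{NonmodularLines}). The triple points among the $m$ non-vertical lines are governed by which triples $\ell_i, \ell_j, \ell_k$ are concurrent, and the modularity of $P$ forces each pairwise intersection $\ell_i \cap \ell_j$ to lie on a vertical line of $\A$. I would index the vertical lines by a cyclic parameter and show that the incidence pattern ``$\ell_i \cap \ell_j$ lies on vertical line $v_{i+j}$'' is forced: this is the additive structure $j + j' \bmod m$ visible in Example~\ref{Example1}. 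Translating this into a statement about the slopes $a_i$, one gets a functional constraint that, over $\R$, has essentially the cotangent/projective solution realized by the B\"or\"oczky points---but since we only need \emph{combinatorial} equivalence (isomorphism of Orlik--Solomon algebras, equivalently of intersection lattices), I need not solve for the slopes, only show the matroid of $\A$ equals the matroid of $\A_{2m}$.

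The main obstacle will be the last step: showing that the forced incidence data---$m$ concurrent lines, $m$ further lines in convex position with their pairwise intersections distributed onto the $m$ concurrent lines according to an additive rule, all other singularities triple---actually \emph{determines} the matroid uniquely, rather than merely constraining it. The subtlety is ruling out ``degenerate'' distributions of the double and triple points (e.g.\ several pairs $\ell_i \cap \ell_j$ landing on the same vertical line in a pattern not isomorphic to $\Z/m$), which is where the real-ordered structure of the slopes $a_1 < \cdots < a_m$ must be used, in the same spirit as Lemma~\ref{NonmodularLines}: a ``wrong'' distribution produces a pairwise intersection of two of the $\ell_i$ lying strictly between two consecutive vertical lines, contradicting modularity of $P$. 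I expect this convexity/ordering argument, carried out carefully for all pairs, to force the cyclic additive incidence structure and hence the matroid of $\A_{2m}$; the converse direction (combinatorial equivalence implies $|Sing_2| = m$) is immediate since $|Sing_2(\A)|$ depends only on the intersection lattice.
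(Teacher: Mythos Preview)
Your proposal shares the opening moves with the paper: using the equality case of Corollary~\ref{Corollary} to deduce that each of the $m$ lines avoiding the modular point $P$ carries exactly one simple singularity, and then using the counting identity along each such line to see that its remaining $m-1$ singularities are all triple. (Your description of lines \emph{through} $P$ is garbled: such a line meets the $m$ lines off $P$, not ``the $m-1$ other lines through $P$'', and it need not carry exactly one simple point---for $m$ even, half carry two and half carry none. But the conclusion that every singularity other than $P$ has multiplicity $2$ or $3$ is correct and follows already from the analysis of lines off $P$.)

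Where the approaches diverge is at the decisive step. You propose to order the $m$ non-vertical lines by slope and argue directly that the incidences must obey the cyclic/additive rule $\ell_i\cap\ell_j\in v_{i+j\bmod m}$, using convexity to rule out alternative patterns. The paper does something different and more local: its Claim~2 shows that \emph{no line through $P$ can carry three or more simple points}, via a concrete plane-geometry fact (given a triangle and a point $Q$ off its sides, the three cevian feet from $Q$ are never collinear---essentially Ceva versus Menelaus over $\R$), and then iterates this to a contradiction. That bound $u\le 2$, together with $u+2v=m$, forces the parity-dependent distribution of simple points on the pencil through $P$, after which the paper matches NBC-bases of the two quadratic Orlik--Solomon algebras without ever explicitly naming the additive rule.

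Your route, if completed, would pin down the matroid more transparently than the paper's NBC shortcut, but as written it has a genuine gap: you flag the convexity/ordering step as ``the main obstacle'' and do not supply the argument. The difficulty is real---two non-vertical lines can meet strictly between two consecutive verticals without contradicting modularity, since their intersection may land on some \emph{other} vertical of the pencil---so the Lemma~\ref{NonmodularLines} mechanism does not rule out non-additive incidence patterns pair by pair. The paper's triangle/cevian argument is precisely the missing real-geometry ingredient; without it or a substitute, your proposal does not close.
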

\begin{proof} We first analyze the geometry of $\A_{2m}$.

Let $L_j:=V(\cos\frac{2\pi j}{m} x+\sin\frac{2\pi j}{m}y+z)$ for $j=0,\ldots,m-1$ and $L'_k:=V(\sin\frac{\pi k}{m} x-\cos\frac{\pi k}{m} y)$ for $k=0,\ldots,m-1$ be the lines of $\A_{2m}$; observe that the modular point $P:=[0,0,1]$ lies on all the $L'_k$'s. We have the following circuits (i.e.\ minimal dependent sets): $$\{L_j,L_{j'},L'_{\overline{j+j'}}\}, j<j'\mbox{ and }\{L'_k,L'_{k'},L'_{k''}\}, k<k'<k'',$$ where $\overline{j+j'}$ denotes the reminder of the division of $j+j'$ by $m$. Note that if $j+j'\equiv j+j'' \; ({\rm mod}\, m)$ with $j',j''\in\{0,\ldots,m-1\}$, then $m \vert (j'-j'')$ and hence $j'=j''$ (since $\lvert j'-j'' \rvert \leq m-1$). Therefore, the intersection points on lines not passing through $P$ are either simple points or triple points.

\medskip

\noindent {\em Claim 1.} There exists exactly one simple point on each $L_j$, namely the intersection of $L_j$ and $L'_{\overline{2j}}$.

Suppose there exists $L_{j'}$ that passes through the same intersection point. Then we must have $\overline{j+j'}=\overline{2j}$, leading to $m \vert (j'-j)$. So $j'=j$.

As a consequence we obtain that if $m$ is even, then half of the lines through $P$ have exactly two simple points and half have no simple points, and if $m$ is odd, then each line through $P$ has exactly one simple point.

\vskip .2in

Let $\A$ be a real supersolvable line arrangement with $m(\A) = m$ consisting of $n=2m$ lines. Suppose that $\lvert Sing_2(\A) \rvert=m$. To show that $\A$ and $\A_{2m}$ are combinatorially equivalent we follow roughly the same ideas as Green and Tao, with the hope that for supersolvable arrangements the argument is more transparent. If for them the Caley-Bacharach Theorem was the key ingredient in the proof, for us a simple plane geometry problem does the trick (see the proof of Claim 2 below).

Let $Q \in Sing(\A)$ with $m(Q,\A)=m$. Then, by Lemma \ref{MaxModular}, $Q$ is a modular point. Let $M'_0,\ldots,M'_{m-1}$ be the lines of $\A$ passing through $Q$. Also, denote by $M_0,\ldots,M_{m-1}$ the remaining $n-m=m$ lines of $\A$.

By Lemma \ref{NonmodularLines}, each line $M_i$ has at least one simple point on it which is the intersection of this $M_i$ and one of the $M'_k$. Since we have $m$ simple points in total and $m$ lines not passing through $Q$, each $M_i$ has exactly one simple point on it. If an $M_i$ has a point with 4 or more lines of $\A$ through it, then, from equation (\ref{Bezout}) in the proof of Theorem \ref{Main1}, we obtain $2(m-1)<n-2$, contradicting that $2m=n$. So all the other points on $M_i$ have multiplicity 3.

Let us consider some arbitrary line $M'\in\A$ through $Q$. Suppose it has $u$ simple points and $v$ triple points. Then, the same equation (\ref{Bezout}) gives $$m-1+u+2v=n-1,$$ leading to $u+2v=n-m=m$.

\medskip

\noindent{\em Claim 2.} $u\leq 2$.

Suppose $u\geq 3$. Pick $M_1,M_2,M_3$ through 3 of the $u$ simple points on $M'$. If $M_1\cap M_2\cap M_3=\{Q'\}$, then since $Q$ is modular, then there is the line connecting $Q'$ and $Q$ that leads to $m(Q',\A)\geq 4$. Contradiction. So $M_1\cap M_2 = \{Q_{1,2}\}, M_1\cap M_3 = \{Q_{1,3}\}, M_2\cap M_3 = \{Q_{2,3}\}$, with these three points distinct. Since $Q$ is modular, it must connect to these three points through some lines $M'_3,M'_2,M'_1$ respectively. Intersecting $M_i$ and $M'_i$, for $i=1,2,3$, we obtain $Q_i$. The lines $M_1,M_2,M_3$ already have their simple points, so $m(Q_i,\A)=3, i=1,2,3$. Also, $Q$ is already connected to each $Q_i$, so the extra line through each $Q_i$ that adds up to the total multiplicity of 3, must come from a line not through $Q$ and different than $M_1,M_2,M_3$.

The geometry of the real plane cannot allow for the points $Q_i$ to be collinear (see figure below): we have the triangle $\triangle(Q_{1,2}Q_{1,3}Q_{2,3})$ and a point $Q$ not on any of its edges $M_1, M_2,M_3$. Let $Q_1=QQ_{2,3}\cap M_1, Q_2=QQ_{1,3}\cap M_2,Q_3=QQ_{1,2}\cap M_3$. Then $Q_1,Q_2,Q_3$ are not collinear.

\begin{center}
\epsfig{file=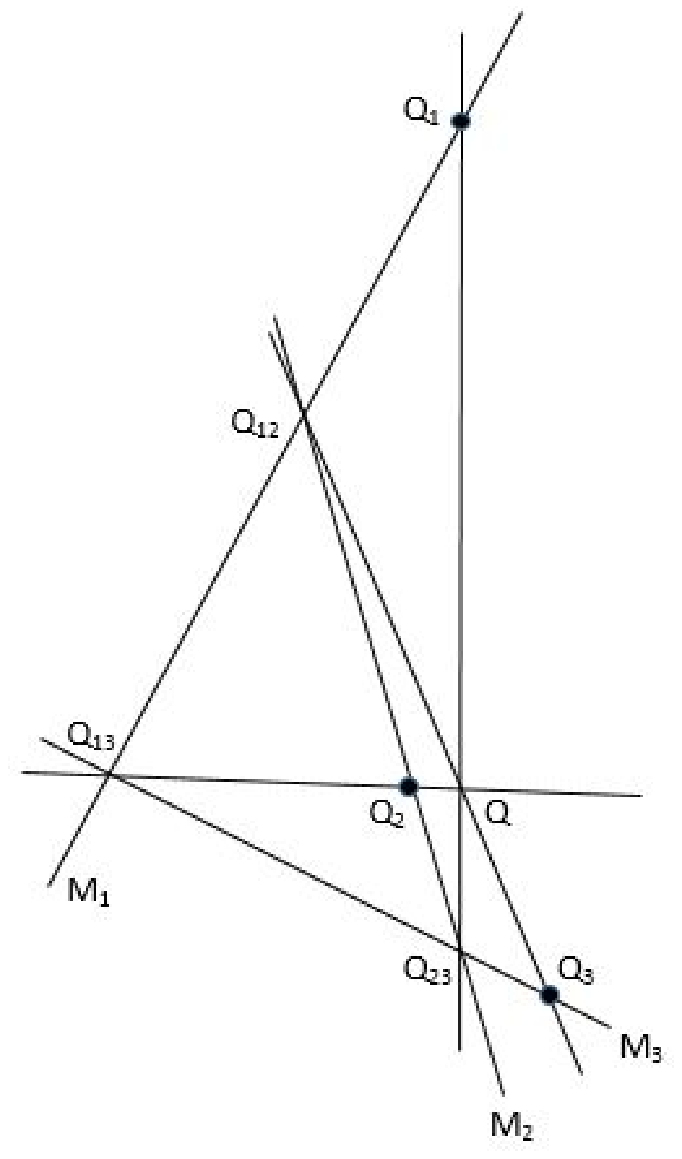,height=3.5in,width=3in}
\end{center}

Suppose one such extra line contains $Q_1$ and $Q_2$. Then it intersects $M_3$ in a ``new'' point $Q'_3$. The extra line passing through $Q_3$ should not contain either $Q_1$, nor $Q_2$ because it will make their multiplicity bump to 4. So this extra line passing through $Q_3$ intersects the lines $M_1$ and $M_2$ in two ``new'' points $Q'_1$ and $Q'_2$. In the case when each $Q_i$ comes with its own extra line, we still obtain three ``new'' points $Q'_1$, $Q'_2$, $Q'_3$ on $M_1,M_2,M_3$, respectively. Now with these three ``new'' non-collinear (by the same geometry argument as above) points replacing the three points $Q_{i,j}$, we can continue the argument repeatedly until we exhaust the lines through $Q$, obtaining a contradiction.

\medskip

Claim 2 tells us that if $m$ is odd, then $u=1$, and hence each line through $Q$ has exactly one simple point, and if $m$ is even, then $u=0$ or $u=2$, and hence, keeping in mind that there are exactly $m$ simple points, half of the lines through $Q$ have exactly two simple points and half have no simple point.

\medskip

To conclude the proof, we show that the Orlik-Solomon algebras are isomorphic via an isomorphism of NBC-bases (see \cite{OrTe} for definitions and more information, but more specifically \cite{Pe}). The abbreviation NBC stands for {\em non broken circuit} (in \cite[Definition 2.1]{Pe} it is called {\em basic}), and it means the following: Suppose one picks an ordering of the hyperplanes of an arrangement $\A = \{H_1, \ldots, H_n\}$. A broken ciruit (under this chosen ordering) is a subset $S \subseteq A$ such that there is $H \in \A$ with $H$ smaller (with respect to the ordering) than all elements of $S$ and such that $S \cup \{H\}$ is a circuit. An NBC is an independent subset of $\A$ that does not contain a broken circuit.

$\A_{2m}$ is supersolvable, and suppose we order the lines of this arrangement as $$L'_0<L'_1<\cdots<L'_{m-1}<L_0<L_1<\cdots<L_{m-1}.$$ Then \cite[Theorem 3.22]{Pe} (or the original \cite[Theorem 2.8]{BjZi}) says that the Orlik-Solomon algebra, $OS(\A_{2m})$, is quadratic. The quadratic NBC elements in its basis consisting of the following: $$\{(L'_0,L'_i): 1\leq i\leq m-1\}\cup\{(L_j,L'_{\overline{2j}}):0\leq j\leq m-1\}.$$

Similarly, $\A$ is supersolvable, and if we order its lines as $$M'_0<M'_1<\cdots<M'_{m-1}<M_0<M_1<\cdots<M_{m-1}$$ then $OS(\A)$ is quadratic, with the quadratic NBC elements in its basis consisting of the following: $$\{(M'_0,M'_i): 1\leq i\leq m-1\}\cup\{(M_j,M'_{\delta(j)}):0\leq j\leq m-1\},$$ where $M'_{\delta(j)}$ is the line passing through $Q$ such that $M_j\cap M'_{\delta(j)}$ is the unique simple point on $M_j$.

From Claim 1, it is clear that the two Orlik-Solomon algebras are isomorphic.
\end{proof}

\subsection{The Slope problem.} Given $n\geq 3$ points in the (real) plane, not all collinear, they determine at least $n-1$ lines distinct slopes. The bound can be achieved only if $n\geq 5$ is odd. In \cite{JaHi} 4 infinite families and 102 sporadic examples are shown to satisfy the equality in the bound (here we cited from \cite[Chapter 11]{AiZi}).

The connection with supersolvable line arrangements is the following: Consider $P_1,\ldots,P_n\in\mathbb R^2, n\geq 3$ not all collinear. We consider these points in $\mathbb P(\mathbb R^2)$, by adding one extra (homogeneous) coordinate equal to 1. Two lines in $\mathbb R^2$ are parallel (respectively, identical) and have the same slope, if, when embedded in $\mathbb P(\mathbb R^2)$, they intersect one another at the line at infinity (of equation $z=0$).

Next let $D_1,\ldots, D_w$ to be all the points lying at the line of infinity, obtained by intersecting this line at infinity with all the lines determined by $P_1,\ldots,P_n$. Then, $w$ is the number of distinct slopes of all these lines. The Slope problem says that $$w\geq n-1.$$

Dualizing the above construction, we obtain the following:

$\bullet$ $P_1,\ldots,P_n$ become the lines $\ell_1,\ldots,\ell_n\in\mathbb P^2$, and $D_1,\ldots,D_w$ become the lines $\delta_1,\ldots,\delta_w\in \mathbb P^2$.

$\bullet$ The line at infinity (of equation $z=0$) becomes the point $[0,0,1]\in\mathbb P^2$, which belongs to every $\delta_i,i=1,\ldots,w$.

$\bullet$ $P_i$ and $P_j$ determine the line $\ell$ if and only if $\ell_i$ and $\ell_j$ intersect in the point dual to $\ell$. Furthermore, since $\ell$ intersects the line at infinity at some $D_k$, the point dual to $\ell$ belongs to the line $\delta_k$.

Consider the line arrangement $\A_{PD}=\{\ell_1,\ldots,\ell_n,\delta_1,\ldots,\delta_w\}\subset\mathbb P^2$. Then, the three bullets above (especially the last two) imply that $\A_{PD}$ is supersolvable (with a modular point $P_{mod}:=[0,0,1]$).

We have $m(P_{mod})=w$ and $n=\lvert \A_{PD}\rvert-w$, therefore $$w\geq n-1\mbox{ if and only if } m(P_{mod})\geq\frac{\lvert \A_{PD}\rvert-1}{2}.$$ This statement leaded us to the following

\begin{prop}\label{equivalence} The following are equivalent:
\begin{enumerate}
  \item[(1)] The Slope problem.
  \item[(2)] If $\A$ is a full rank real supersolvable line arrangement in $\mathbb P^2$, then $$m(\A)\geq\frac{|\A|-1}{2}.$$
\end{enumerate}
\end{prop}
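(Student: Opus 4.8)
The plan is to establish the equivalence by going through the dualization already set up in the paragraphs preceding the statement. The key realization is that that discussion provides the implication $(1)\Rightarrow(2)$ only for arrangements of a special shape — namely those of the form $\A_{PD}$, where the lines $\delta_1,\dots,\delta_w$ through the modular point are exactly the ``slope lines'' determined by the $\ell_i$'s. So the first thing I would do is observe that \emph{every} full rank real supersolvable line arrangement arises this way, up to the combinatorics that matter. Concretely, given $\A$ supersolvable with modular point $P_{mod}$, apply a projective change of coordinates sending $P_{mod}$ to $[0,0,1]$; split $\A$ into the lines through $P_{mod}$ (call them $\delta_1,\dots,\delta_w$, so $w=m(\A)$) and the remaining lines $\ell_1,\dots,\ell_n$ (so $n=|\A|-w$). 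Dualizing, the $\ell_i$ become points $P_1,\dots,P_n$ in the affine plane $z\neq 0$, and the $\delta_k$ become points on the line at infinity. Modularity of $P_{mod}$ says precisely that for every $i\ne j$, the line $\ell_i\cap\ell_j$ — i.e.\ the line through $P_i$ and $P_j$ — passes through some $\delta_k$, i.e.\ has one of the $w$ ``slopes'' recorded by $\delta_1,\dots,\delta_w$. Hence $P_1,\dots,P_n$ determine \emph{at most} $w$ slopes. I would also note the $P_i$ are not all collinear: if they were, $\A$ would have rank $\le 2$, contradicting full rank.

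With this dictionary in place, both directions are short. For $(1)\Rightarrow(2)$: given $\A$ as above with $n=|\A|-m(\A)\ge 3$, the Slope problem applied to $P_1,\dots,P_n$ gives that they determine at least $n-1$ slopes, but they determine at most $w=m(\A)$ slopes, so $m(\A)\ge n-1=|\A|-m(\A)-1$, i.e.\ $m(\A)\ge(|\A|-1)/2$. I need to handle the low cases $n=|\A|-m(\A)\le 2$ separately: if $n\le 2$ then $m(\A)\ge|\A|-2$, and since $\A$ has full rank $|\A|\ge 3$, one checks directly that $|\A|-2\ge(|\A|-1)/2$ is equivalent to $|\A|\ge 3$, so the inequality still holds. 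For $(2)\Rightarrow(1)$: given non-collinear $P_1,\dots,P_n$ determining $w$ slopes, form $\A_{PD}$ exactly as in the paragraph before the proposition; it is full rank (the $P_i$ are non-collinear, so their dual lines span) real supersolvable with a modular point of multiplicity $w$ and $|\A_{PD}|=n+w$. Applying $(2)$ gives $w\ge(n+w-1)/2$, i.e.\ $w\ge n-1$, which is the Slope problem.

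The one genuinely substantive point — and the step I'd expect to need the most care — is verifying that the dualized object $\A_{PD}$ (or, in the other direction, the decomposition of an arbitrary supersolvable $\A$) really is a \emph{full rank} arrangement and that no lines coincide or collapse: one must check that the $\delta_k$ are genuinely distinct from the $\ell_i$ and from each other, that $[0,0,1]\notin\ell_i$ for all $i$ (which holds because $P_i$ is an affine point), and that the arrangement has rank exactly $3$ rather than $2$. All of these follow from the non-collinearity hypothesis on the $P_i$ together with $n\ge 3$, but it is worth stating them explicitly since the equivalence is only as clean as the correspondence between ``supersolvable arrangement with modular point of multiplicity $w$ and $n$ other lines'' and ``$n$ non-collinear points with $w$ slopes.'' Once that correspondence is nailed down, the arithmetic translating $w\ge n-1$ into $m(\A)\ge(|\A|-1)/2$ is immediate from $|\A|=n+w$.
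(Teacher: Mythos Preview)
Your overall strategy matches the paper's --- both proofs run through the duality between supersolvable arrangements and point-slope configurations --- but there is a genuine gap that the paper fills and you do not.

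The error is your claim that ``if [the $P_i$] were [all collinear], $\A$ would have rank $\le 2$.'' This is false. If the $\ell_i$ are all concurrent at some point $Q\neq P_{mod}$, then $\A$ is a union of two pencils (the $\delta_k$ through $P_{mod}$ and the $\ell_i$ through $Q$), which is a perfectly good rank~$3$ arrangement. So your verification that the Slope problem is applicable breaks down. The same oversight bites you in $(2)\Rightarrow(1)$: you write ``Applying $(2)$ gives $w\ge(n+w-1)/2$,'' but statement $(2)$ gives $m(\A_{PD})\ge(n+w-1)/2$, and you never check that $m(\A_{PD})=w$. A priori some other point of $\A_{PD}$ could have multiplicity exceeding $w$, and then the inequality you need does not follow.

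The paper handles both issues with one short lemma proved at the start: if a supersolvable arrangement has two modular points of \emph{distinct} multiplicities $m(Q_1)>m(Q_2)$, then every line passes through $Q_1$ or $Q_2$, so $m(Q_1)+m(Q_2)=|\A|+1$ and hence $m(\A)>(|\A|+1)/2$. This immediately disposes of the degenerate cases: for $(1)\Rightarrow(2)$, if the $P_i$ happen to be collinear then $Q$ is a second modular point and the inequality in $(2)$ follows directly from this lemma; for $(2)\Rightarrow(1)$, if $m(\A_{PD})>w$ then the point of maximum multiplicity is a second modular point, forcing all $\ell_i$ through it, hence the $P_i$ collinear --- contradiction. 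You should add this observation (or an equivalent argument) to close the gap.
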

\begin{proof} First let us note that if a supersolvable line arrangement $\A$ has two modular points $Q_1$ and $Q_2$, of distinct multiplicities $m(Q_1)>m(Q_2)$, then $m(Q_1)+m(Q_2)=\lvert\A\rvert+1$, meaning that any line of $\A$ must pass through $Q_1$ or $Q_2$. If there were a line $\ell$ not passing through $Q_1$ and $Q_2$, then from the modularity of these two points, the number of points on $\ell$ is precisely $m(Q_1)$. However, it must also be $m(Q_2)$, giving that $m(Q_1) = m(Q_2)$, a contradiction.

If we are in the situation of the observation above, then $m(\A)\geq m(Q_1)>(\lvert\A\rvert+1)/2$, hence (2) in the statement is satisfied immediately. Also, if the line arrangement $\A_{PD}$ has $m(P_{mod})<m(\A_{PD})$, then $\ell_1,\ldots,\ell_n$ all pass through the modular point of $\A_{PD}$ of maximum multiplicity $m(\A_{PD})$, and therefore the points $P_1,\ldots,P_n$ we started with are collinear, contradicting the conditions of (1). So $m(P_{mod})=m(\A_{PD})$.

With the two observations above, the equivalence is evident. That (2) implies (1) is immediate, and for (1) implies (2) we may pick a (modular) point of maximum multiplicity and change variables so that this point is $P_{mod}=[0,0,1]$. Then the line arrangement becomes an arrangement of the form $\A_{PD}$. \end{proof}

\section{Supersolvable line arrangements over any field of characteristic 0}

\subsection{The degree of the reduced Jacobian scheme.} In this section we assume $\mathbb K$ to be any field of characteristic 0. Let $\A\subset\mathbb P_{\mathbb K}^2$ be a full rank supersolvable arrangement of $n$ lines. Let $m:=m(\A)$ be the multiplicity of $\A$, and suppose $m\geq 3$. Let $P\in Sing(\A)$ with $m(P,\A)=m$; therefore by Lemma \ref{MaxModular}, $P$ is modular.

In $R:=\mathbb K[x,y,z]$, let $f$ be the defining polynomial of $\A$, let $g$ be the product of linear forms defining the $m$ lines of $\A$ passing through $P$, and let $h$ be the product of the linear forms defining the $n-m$ lines of $\A$ not passing through $P$. Without loss of generality we may assume $$f=g\cdot h.$$

Let $\A_h:=V(h)\subset\mathbb P^2$; this is the line arrangement consisting of all the lines of $\A$ not passing through $P$. The following equivalent statement is immediate: $Q\in Sing(\A_h)$ if and only if $Q\in Sing(\A)$ and $m(Q,\A)\geq 3$. Therefore, taking into account that $P$ is not a simple point of $\A$ ($m\geq 3$), we have the formula

\begin{equation} \label{SingularityEQ}
\lvert Sing_2(\A) \rvert=\lvert Sing(\A) \rvert - \lvert Sing(\A_h) \rvert-1.\,
\end{equation}

This is the main reason why the study of $\lvert Sing(\A) \rvert$ goes together with the study of $\lvert Sing_2(\A) \rvert$. In fact we have the following immediate result:

\begin{prop}\label{bound} Let $\A$ be a full rank supersolvable arrangement of $n$ lines, with max multiplicity $m\geq 3$. Then $$\lvert Sing_2(\A) \rvert \geq 2\lvert Sing(\A) \rvert-m(n-m)-2.$$ Equality holds if and only if $\A_h$ is generic (i.e., $Sing_2(\A_h)=Sing(\A_h)$), and in this case $\lvert Sing_2(\A) \rvert=(n-m)(2m-n+1)$.
\end{prop}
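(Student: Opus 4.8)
The plan is to extract everything from the counting identity in the introduction applied to the lines through $P$, combined with the substitution formula (\ref{SingularityEQ}). First I would set up notation: write $Sing(\A_h) = \{Q_1, \ldots, Q_t\}$ and for each $j$ let $m_j := m(Q_j, \A) \geq 3$ be the multiplicity; note $t = \lvert Sing(\A_h)\rvert$. Since $P$ is modular, every point of $Sing(\A_h)$ lies on exactly one of the $m$ lines through $P$ (it cannot lie on two of them, or it would equal $P$), and the simple points of $\A$ on a line through $P$ are exactly the intersections of that line with the $n-m$ lines of $\A_h$ that do not already pass through a triple-or-higher point on that line. The cleanest route is to count, for a fixed line $\ell'$ through $P$, using the introduction's identity $\sum (m_{P_i}-1) = n-1$: the $n-1$ here is contributed by the $m-1$ other lines through $P$ together with the $n-m$ lines of $\A_h$; subtracting the $(m-1)$ coming from lines through $P$, the points on $\ell'$ lying on $\A_h$ account for $n-m$ in the sum $\sum_{Q_j \in \ell'} (m_j - 1)$. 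Summing over all $m$ lines through $P$ and using that each $Q_j$ lies on exactly one such line gives $\sum_{j=1}^t (m_j - 1) = m(n-m)$.

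Next I would count simple points of $\A$. On a line $\ell'$ through $P$, the simple points are the $n-m$ intersections with $\A_h$ minus those intersection points that are actually higher-multiplicity points of $\A$; a point $Q_j \in \ell' \cap \A_h$ with $m(Q_j,\A) = m_j$ absorbs $m_j - 1$ of the lines of $\A_h$ into a single non-simple point instead of $m_j - 1$ separate simple points, so it reduces the count of simple points on $\ell'$ by $(m_j - 1) - 1 = m_j - 2$ relative to the naive $n-m$. Hence the number of simple points on $\ell'$ is $(n-m) - \sum_{Q_j \in \ell'}(m_j - 2)$. Summing over the $m$ lines through $P$: $\lvert Sing_2(\A)\rvert = m(n-m) - \sum_{j=1}^t (m_j - 2) = m(n-m) - \sum_{j=1}^t(m_j-1) + t = m(n-m) - m(n-m) + t$. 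Wait — that would give $\lvert Sing_2(\A)\rvert = t$, which is wrong, so I would instead route through (\ref{SingularityEQ}) directly: $\lvert Sing_2(\A)\rvert = \lvert Sing(\A)\rvert - t - 1$, and separately bound $t$.

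The real content is then the inequality $t = \lvert Sing(\A_h)\rvert \leq \frac{1}{2} m(n-m)$, with equality iff every $m_j = 3$, i.e.\ $\A_h$ generic. This follows at once from $\sum_{j=1}^t (m_j - 1) = m(n-m)$ together with $m_j \geq 3 \iff m_j - 1 \geq 2$, giving $2t \leq \sum (m_j - 1) = m(n-m)$, with equality exactly when $m_j = 3$ for all $j$. Plugging into (\ref{SingularityEQ}): $\lvert Sing_2(\A)\rvert = \lvert Sing(\A)\rvert - t - 1 \geq \lvert Sing(\A)\rvert - \tfrac12 m(n-m) - 1$, which is not quite the stated bound. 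To get $2\lvert Sing(\A)\rvert - m(n-m) - 2$, I would instead write $t = \lvert Sing(\A)\rvert - \lvert Sing_2(\A)\rvert - 1$ from (\ref{SingularityEQ}) and substitute into $2t \leq m(n-m)$: this gives $2\lvert Sing(\A)\rvert - 2\lvert Sing_2(\A)\rvert - 2 \leq m(n-m)$, i.e.\ $\lvert Sing_2(\A)\rvert \geq \lvert Sing(\A)\rvert - \tfrac12 m(n-m) - 1$; doubling is not what is wanted either, so the correct manipulation is simply to rearrange $2t \le m(n-m)$ with $t$ replaced via (\ref{SingularityEQ}) as $2(\lvert Sing(\A)\rvert - \lvert Sing_2(\A)\rvert - 1) \le m(n-m)$, solved for $\lvert Sing_2(\A)\rvert$.

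For the equality case: equality in $2t \le m(n-m)$ holds iff all $m_j = 3$, i.e.\ $Sing(\A_h)$ consists only of triple points of $\A$, equivalently double points of $\A_h$, equivalently $\A_h$ is generic; and then $t = \tfrac12 m(n-m)$, and also the identity $\sum_{Q_j \in \ell'}(m_j - 1) = n-m$ with all $m_j = 3$ forces each line through $P$ to meet $\A_h$ in exactly $(n-m)/2$ double points, giving $\lvert Sing_2(\A)\rvert = m(n-m) - 2t = 0$ on... no: recompute carefully, the number of simple points on each line $\ell'$ through $P$ is $(n-m) - \sum_{Q_j\in\ell'}(m_j-2) = (n-m) - \sum_{Q_j\in\ell'} 1 = (n-m) - (n-m)/2 = (n-m)/2$, so $\lvert Sing_2(\A)\rvert = m\cdot(n-m)/2$; the paper's claimed value $(n-m)(2m-n+1)$ should then be checked to agree via $2m - n + 1$ versus $m/2$ — these match only under an additional relation, so I would re-derive the generic count directly: each of the $n-m$ lines of $\A_h$ meets the other $n-m-1$ lines of $\A_h$ in $n-m-1$ distinct points (generic) plus the $m$ lines through $P$, and a simple point of $\A$ on it occurs exactly where it meets a line through $P$ not at one of those $\A_h$-$\A_h$ intersections; with genericity the $\binom{n-m}{2}$ internal points are all double points of $\A_h$ = triple of $\A$, absorbing $2(n-m-1)$... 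The hard part will be pinning down this equality-case count correctly and reconciling it with $(n-m)(2m-n+1)$; the inequality itself is the easy direct consequence of the circuit identity and $m_j \ge 3$.
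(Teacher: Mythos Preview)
Your overall strategy is the same as the paper's, but there is a concrete counting error that makes the argument fail to reach the stated bound. You write that on a line $\ell'$ through $P$, after subtracting the contribution $m-1$ from $P$, ``the points on $\ell'$ lying on $\A_h$ account for $n-m$ in the sum $\sum_{Q_j \in \ell'} (m_j - 1)$,'' and conclude $\sum_{j=1}^{t}(m_j-1)=m(n-m)$. This is false: the singular points on $\ell'$ other than $P$ are not only the points $Q_j\in Sing(\A_h)$ but also the \emph{simple} points of $\A$ on $\ell'$. The correct line-by-line identity is
\[
(m-1)+s+\sum_{Q_j\in\ell'}(m_j-1)=n-1,
\]
where $s$ is the number of simple points on $\ell'$; summing over the $m$ lines through $P$ gives $\sum_{j=1}^t(m_j-1)=m(n-m)-\lvert Sing_2(\A)\rvert$, not $m(n-m)$. (The same slip recurs when you write the simple count on $\ell'$ as $(n-m)-\sum(m_j-2)$; it is $(n-m)-\sum(m_j-1)$.) This is exactly why your substitution yields only $\lvert Sing_2(\A)\rvert\ge \lvert Sing(\A)\rvert-\tfrac12 m(n-m)-1$, which is strictly weaker than the proposition's bound, and why your computation $\lvert Sing_2(\A)\rvert=t$ came out absurd.

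With the corrected identity the rest of your plan works and matches the paper: from $m_j\ge 3$ one gets $2t\le m(n-m)-\lvert Sing_2(\A)\rvert$, and substituting $t=\lvert Sing(\A)\rvert-\lvert Sing_2(\A)\rvert-1$ from (\ref{SingularityEQ}) gives exactly $\lvert Sing_2(\A)\rvert\ge 2\lvert Sing(\A)\rvert-m(n-m)-2$, with equality iff every $m_j=3$, i.e.\ $\A_h$ generic. In that case $t=\binom{n-m}{2}$, hence $\lvert Sing_2(\A)\rvert=m(n-m)-2\binom{n-m}{2}=(n-m)(2m-n+1)$; your attempt to recover this value was derailed only because you were working from the wrong identity.
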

\begin{proof} Let $P\in Sing(\A)$ be such that $m(P,\A)=m$. Then $P$ is modular. Let $\ell\in\A$ be an arbitrary line passing through $P$. Suppose there are $s$ simple points on $\ell$, and $t$ multiple points on $\ell$, distinct from $P$, with multiplicities $n_1,\ldots,n_t\geq 3$. Equation (\ref{Bezout}) in the proof of Theorem \ref{Main1} gives: $$(m-1)+s+(n_1-1)+\cdots+(n_t-1)=n-1.$$ As $n_i\geq 3$ leads to $$n-m-s\geq 2t.$$ Summing over all the lines through $P$ implies $$m(n-m)-\lvert Sing_2(\A) \rvert\geq 2 \lvert Sing(\A_h) \rvert.$$

Formula (\ref{SingularityEQ}) above proves the assertion, and the ``if and only if'' statement. If $\A_h$ is generic, then $\lvert Sing(\A_h)\rvert={{n-m}\choose{2}}$, and we immediately obtain the claimed formula.
\end{proof}

It is worth noting that for the arrangement(s) in Theorem \ref{unique}, $\lvert Sing(\A)\rvert={{m+1}\choose{2}}+1$. To see this calculate $u+v$ for each line through $P$, sum these and add 1 to account for $P$. Then formula (\ref{SingularityEQ}), together with $\lvert Sing_2(\A)\rvert=m$, gives $$\lvert Sing(\A_h)\rvert={{m}\choose{2}},$$ meaning that $\A_h$ is generic (as $\lvert\A_h\rvert=n-m=m$).

\subsubsection{An interesting example.} For any homogeneous polynomial $\Delta\in R$, let $J_{\Delta}=\langle \Delta_x,\Delta_y,\Delta_z\rangle\subset R$ be the {\em Jacobian ideal of $\Delta$}. Since $\Delta$ is a homogeneous polynomial, $J_{\Delta}$ defines the scheme of the singular locus of the divisor $V(\Delta)\subset \mathbb P^2$. A singular point shows up in $J_{\Delta}$ with a certain multiplicity (known in the literature as the Tjurina number), but we are interested only in the number of singular points, which is the degree of $\sqrt{J_{\Delta}}$ (the defining ideal of the reduced Jacobian scheme).

The homological information of $J_f$, where $f$ is the defining polynomial of a supersolvable line arrangement, is very well understood. For example, the first syzygies module of $J_f\subset R$ is a free $R$-module of rank 2, with basis elements having degrees $m-1$ and $n-m$ (see \cite{JaTe}, or \cite{OrTe}).

By \cite[Theorem 2.2]{To2}, there exists $(\alpha,\beta,\gamma)$ a syzygy on $J_f$ with $\{\alpha,\beta,\gamma\}$ forming a regular sequence. The degree of this syzygy is $d$ (equal to $m-1$ or $n-m$). Then, if $m<n-1$, by \cite[Proposition 3.1]{To2}, $\lvert Sing(\A) \rvert\leq d^2+d+1$.

When \cite{To2} appeared, there was a question as to if the bound can be attained. The next example shows that it can. We came across this example while trying to prove Proposition \ref{equivalence}(2) without using the Slope problem. We have so far been unable to find such a proof.

Consider the line arrangement $\A$ with defining polynomial $f=xyz(x-y)(x-z)(y-z)(x+y-z)(x-y+z)(x-y-z)$.

\begin{center}
\epsfig{file=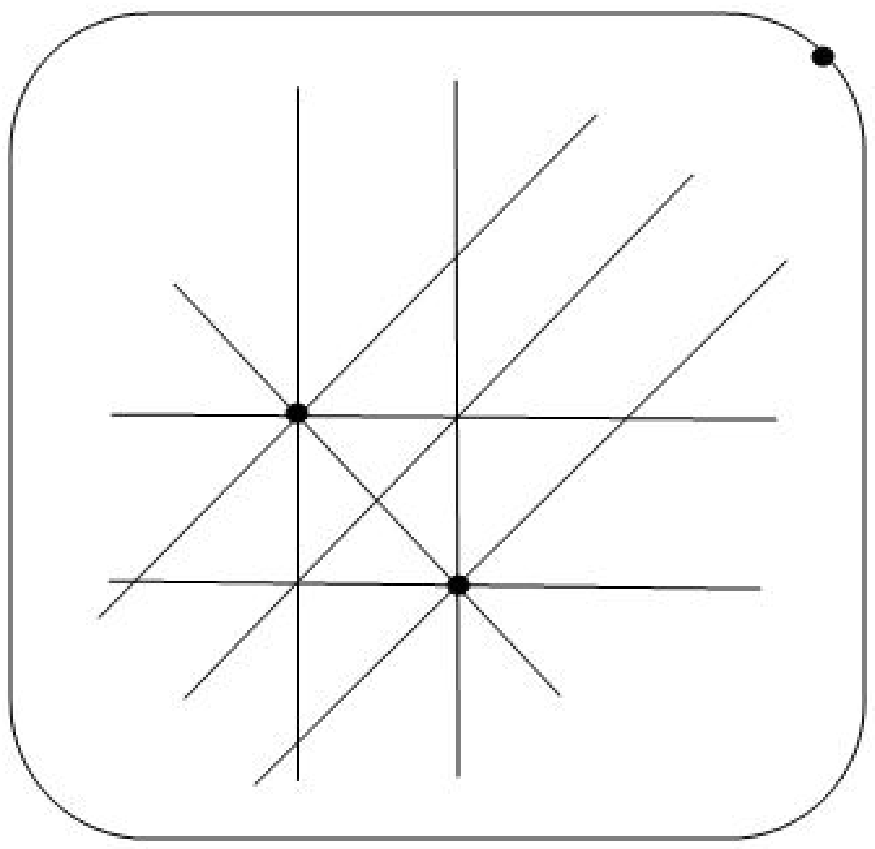,height=2.3in,width=2.3in}
\end{center}

We have $n=9$ and $m=4$. This is the projective picture with the border being the line at infinity $z=0$. The marked points are modular points of maximum multiplicity.

The calculations below were done with \cite{GrSt}.

$P=[1,1,0]\in Sing(\A)$ with $m(P,\A)=4$; the lines of equation $z=0,x-y=0,x-y+z=0,x-y-z=0$ all pass through $P$. In fact, since $z(x-y)(x-y+z)(x-y-z)\in\sqrt{J_{f}}$, by \cite[Theorem 2.2]{To1}, $P$ is a modular point and $\A$ is supersolvable.

Calculations show that

\begin{eqnarray}
&&(\frac{1}{10}x^2z-\frac{11}{30}xyz+\frac{1}{10}y^2z+\frac{1}{12}xz^2+\frac{1}{12}yz^2-\frac{1}{20}z^3,\nonumber\\ &&\frac{1}{5}x^2y+\frac{1}{6}xy^2-\frac{1}{10}y^3-\frac{11}{15}xyz+\frac{1}{6}y^2z+\frac{1}{5}yz^2,\nonumber\\ &&-\frac{3}{10}x^3+\frac{1}{2}x^2y+\frac{3}{5}xy^2+\frac{1}{2}x^2z-\frac{11}{5}xyz+\frac{3}{5}xz^2) \nonumber
\end{eqnarray} is a syzygy on $J_{f}$ (i.e., on some linear combination of the partial derivatives of $f$). Its entries generate an ideal of height 3, and hence they form a regular sequence. The degree of this syzygy is $3=m-1$.

By \cite[Proposition 3.1]{To2}, $\lvert Sing(\A) \rvert\leq 3^2+3+1=13$, and calculations show that in fact we have equality.

\subsection{Dirac-Motzkin conjecture for supersolvable arrangements over $\mathbb C$.} \cite[Theorem 2]{Di} shows that any complex smooth cubic plane curve has exactly nine inflection points, and Theorem 3 in the same paper proves that any two of these points are collinear with a third (an image of such a curve can be found by searching for ``Hesse configuration''). This shows that Sylvester's original problem does not have a solution over $\mathbb C$.

Such a configuration of points (where any two points in the set are collinear with a third in the set) is called {\em Sylvester-Gallai configuration (SGC)}. Serre proposed the following problem: Any SCG in $\mathbb C^n$ with $n \geq 3$ is contained in a (2-dimensional) plane. The problem was solved by Kelly, for $n=3$ (see \cite[Theorem]{Ke}), and for any $n\geq 3$, by Elkies-Pretorius-Swanepoel (see \cite[Theorem 2]{ElPrSw}; they reduced the problem to the case of $n=3$, though they use a different method than Kelly).

In \cite[Example 4.1]{Zi} it is shown that the line arrangement dual to these nine points (called the Hessian arrangement) is a free arrangement with exponents $(1,4,4)$, but it is not supersolvable. Alternatively, we can see this by using the formula in Proposition \ref{bound}. For, we have $n=9, m=3$ and $\lvert Sing_2(\A)\rvert=0$; if $\A$ were supersolvable, the formula would give $\lvert Sing(\A)\rvert\leq 10$. However, $\A$ has at least 12 singularities corresponding to the 12 lines determined by the 9 inflection points.

\medskip

We conjecture the following

\begin{conj}\label{conjecture} Let $\A\subset\mathbb P^2$ be a full rank complex supersolvable arrangement of $n$ lines. Then $$\lvert Sing_2(\A)\rvert\geq \frac{n}{2}.$$
\end{conj}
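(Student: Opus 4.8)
The plan is to adapt the real-case argument as far as possible and isolate exactly where the complex setting resists it. Let $\A$ be a full rank complex supersolvable arrangement of $n$ lines, $m := m(\A)$, and let $P$ be a point of maximum multiplicity, which by Lemma \ref{MaxModular} is modular. Lemma \ref{MaxModular} and the basic B\'ezout-type identity $\sum_{i=1}^s (m_{P_i} - 1) = n-1$ hold over any field, so the combinatorial scaffolding of the proof of Theorem \ref{Main1} survives verbatim. What does \emph{not} survive is Lemma \ref{NonmodularLines}: its proof uses the ordering of the points $P_1, \ldots, P_m$ on a line and a sign/betweenness argument on slopes, both of which are meaningless over $\C$. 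So the first and main step is to replace Lemma \ref{NonmodularLines} by the weaker but still usable statement: \emph{every line of $\A$ not through $P$ carries at least one simple point of $\A$}. If that statement can be recovered over $\C$, then Corollary \ref{Corollary} gives $\lvert Sing_2(\A)\rvert + m \geq n$, and the entire case analysis of Theorem \ref{Main1} (Case 1, and the inductive Cases 2.u) goes through unchanged, since it only uses Corollary \ref{Corollary}, the identity (\ref{Bezout}), and the full-rank bound $m \leq n-1$ — none of which reference the ground field. Thus the conjecture reduces cleanly to the complex analogue of Lemma \ref{NonmodularLines}.

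To attack that reduced statement, I would proceed by contradiction: suppose $\ell \in \A$ with $P \notin \ell$ and $\ell \cap Sing_2(\A) = \emptyset$. Writing $\ell \cap Sing(\A) = \{P_1, \ldots, P_m\}$ (exactly $m$ points, since $P$ is modular), each $P_i$ has multiplicity $\geq 3$, so through each $P_i$ passes, besides $\ell$ and the line $\ell_i'$ joining $P_i$ to $P$, at least one further line $\ell_i \in \A$; and the $\ell_i$ are pairwise distinct because $\ell$ is the unique line meeting all the $P_i$. Now consider the $\binom{m}{2}$ intersection points $\ell_i \cap \ell_j$. Modularity of $P$ forces each such point to lie on a line through $P$, i.e. on one of the $m$ lines $M'_0, \ldots, M'_{m-1}$ through $P$. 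One route is a counting/incidence argument: these $\binom{m}{2}$ points, together with the $\ell_i$ and the $M'_k$ and $\ell$ itself, form a rather rigid subconfiguration, and one hopes to show that it cannot be realized over $\C$ unless some $\ell_i$ creates a simple point on $\ell$ after all — perhaps by invoking a complex Sylvester–Gallai-type obstruction on a cleverly chosen subarrangement (noting that the Hessian arrangement, the standard complex counterexample, is not supersolvable, as observed just before the conjecture). An alternative route, closer in spirit to Theorem \ref{unique}, is to push the "Claim 2" argument (the non-collinearity of the triangle-and-center points) and ask whether its failure over $\C$ can nonetheless be compensated: over $\C$ three such points \emph{can} be collinear, but iterating the construction on the resulting configuration may still exhaust the $m$ lines through $P$ and yield a contradiction with $\lvert \A \rvert = n$.

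The hard part will be precisely this: over $\R$ the obstruction comes from an order/betweenness phenomenon that simply has no analogue over $\C$, and the honest difficulty is that we do not know whether a complex supersolvable arrangement can have a line off the modular point with \emph{no} simple point on it. If such an arrangement exists, the conjecture as stated via this route would fail and one would need genuinely new input — for instance, exploiting the freeness of $J_f$ with exponents $(1, m-1, n-m)$ and the bound $\lvert Sing(\A)\rvert \leq d^2 + d + 1$ from \cite[Proposition 3.1]{To2} together with formula (\ref{SingularityEQ}) to bound $\lvert Sing(\A_h)\rvert$ from below, thereby forcing enough simple points indirectly. I would therefore organize the write-up as: (i) state and prove the field-independent reduction "conjecture $\Leftarrow$ complex Lemma \ref{NonmodularLines}"; (ii) verify the reduced statement in low multiplicity or under extra hypotheses (e.g. when $\A_h$ is generic, where Proposition \ref{bound} already settles matters); and (iii) flag the general complex Lemma \ref{NonmodularLines} as the sole remaining obstacle, which is the honest state of the art and the reason the statement is posed as a conjecture rather than a theorem.
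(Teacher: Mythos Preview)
Your proposal is not a proof, and you are honest about this: the statement is a conjecture in the paper and remains open. Your strategic reduction --- the conjecture follows from a complex analogue of Lemma \ref{NonmodularLines}, since every other ingredient of Theorem \ref{Main1} (Lemma \ref{MaxModular}, the identity (\ref{Bezout}), Corollary \ref{Corollary}, and the case analysis) is field-independent --- is exactly the reduction the paper makes in the paragraph following the conjecture.

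Where the paper goes further than you is in sharpening the target. Starting from a hypothetical line $\ell$ off $P$ with no simple points, the paper observes that one may delete, one at a time, lines through any $P_i \in \ell$ of multiplicity $\geq 4$ (choosing lines not through $P$ and distinct from $\ell$) while preserving supersolvability and the no-simple-point property of $\ell$. This reduces the complex Lemma \ref{NonmodularLines} to the case where \emph{every} point on $\ell$ has multiplicity exactly $3$, forcing $n = 2m+1$; this is Conjecture \ref{conj2}. The paper then normalizes coordinates, translates the supersolvability constraints into determinantal conditions on the $(a_i,b_i)$, and isolates a concrete matroidal family $\mathcal{B}(m)$ (Example \ref{exampleB}) that it proves is never realizable over $\C$. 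An exhaustive case check shows that for $m \leq 5$ any configuration satisfying the hypotheses of Conjecture \ref{conj2} contains a $\mathcal{B}(3)$, $\mathcal{B}(4)$, or $\mathcal{B}(5)$ subarrangement, yielding the corollary that Conjecture \ref{conjecture} holds for $n \leq 12$.

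Your suggested alternative routes --- a complex Sylvester--Gallai obstruction, iterating the Claim 2 triangle construction, or exploiting the freeness exponents and the bound from \cite[Proposition 3.1]{To2} via (\ref{SingularityEQ}) --- are plausible but are not what the paper pursues, and you give no indication that any of them actually closes the gap. The paper's approach buys concrete partial results (small $n$) and an explicit algebraic reformulation (Conjecture \ref{conj3}); your outline correctly locates the obstacle but stops short of the triple-point reduction and the $\mathcal{B}(m)$ mechanism that give the paper its traction.
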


Theorem \ref{Main1} relies on the arrangement being real only in the use of Lemma \ref{NonmodularLines}; everything else is independent of the base field and relies on the observation we made at the end of Introduction. The key in proving Conjecture \ref{conjecture} lies within proving a version of Lemma \ref{NonmodularLines} valid over $\mathbb{C}$.

Suppose $\A\subset\mathbb P^2$ is a full rank supersolvable complex arrangement of $n$ lines. Let $P\in Sing(\A)$ be a point of maximum multiplicity $m:=m(\A)$. Suppose (for contradiction) that there is a line $P\notin\ell\in\A$ with no simple points on it. If there is $Q_i\in\ell\cap Sing(\A)$, with $m(Q_i,\A)\geq 4$, by removing any line through $Q_i$ not passing through $P$ and distinct from $\ell$, the resulting complex arrangement is still supersolvable, has $n-1$ lines, and still has the line $\ell$ with no simple points on it. By this observation, the following conjectured result will prove by contradiction the complex version of Lemma \ref{NonmodularLines}.

\begin{conj}\label{conj2} Let $\A\subset\mathbb P^2$ be a full rank supersolvable arrangement of $n$ lines and with singular modular point $P$ of maximum multiplicity $m\geq 3$. If there exists a line $\ell\in\A$ not passing through $P$ and only with triple singularities on it, then $\A$ is not realizable over $\mathbb C$.
\end{conj}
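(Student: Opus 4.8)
The plan is to analyze the combinatorial constraints forced by the existence of the line $\ell$ and then to exhibit a projective-geometric obstruction (a Ceva/incidence identity) that can only be satisfied over a field of positive characteristic. First I would set up coordinates so that $P = [0,0,1]$, so the $m$ lines through $P$ are ``vertical''; write the $n-m$ lines not through $P$ in affine form $y = a_i x + b_i$, with $\ell$ being one of them. Since $P$ is modular and $\ell$ carries only triple points, $\ell$ meets the $m$ lines through $P$ in points $Q_1,\ldots,Q_m$, and through each $Q_k$ passes exactly one further line $M_k \ne \ell$ not through $P$. Modularity then forces: for every pair $k \ne k'$, the intersection $M_k \cap M_{k'}$ lies on one of the lines through $P$; moreover, by counting via the identity $\sum (m_{P_i}-1) = n-1$ applied to each $M_k$ (and using $m\ge 3$, $|Sing_2|$-freeness of $\ell$), the lines $M_1,\ldots,M_m$ together with the $m$ lines through $P$ must realize a very rigid incidence pattern — essentially each $M_k$ also meets $\ell$'s companion lines only in triple points until the configuration closes up.

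The core of the argument is to translate this closure condition into an algebraic identity among the slopes. Writing $Q_k = (t_k, c_k)$ for the intersection of $\ell$ with the $k$-th vertical line $x = t_k$, and letting $M_k$ have slope $s_k$, the modularity condition ``$M_k \cap M_{k'}$ lies on some vertical line $x = t_{\sigma(k,k')}$'' becomes
\begin{equation}
\frac{s_k t_k - s_{k'} t_{k'} + (c_{k'} - c_k)}{s_k - s_{k'}} \in \{t_1,\ldots,t_m\}
\end{equation}
for each pair $k<k'$. This is a system of $\binom{m}{2}$ equations in the $2m$ unknowns $s_k, t_k$ (the $c_k$ being affine functions of the $t_k$ through $\ell$). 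I would argue, following the spirit of the real proof of Lemma~\ref{NonmodularLines} but now purely algebraically, that the combinatorics of which vertical line receives $M_k \cap M_{k'}$ is forced (up to relabeling) into a ``convexity-type'' pattern — over $\R$ this pattern is geometrically impossible by the betweenness argument of Lemma~\ref{NonmodularLines}, while over $\C$ it forces a multiplicative relation. Concretely, one expects the product of the cross-ratios around the closed-up configuration to equal a root of unity of order dividing $m$ whenever the equations are consistent, and separately to equal $1$ by a telescoping/Menelaus computation; the incompatibility $\zeta_m = 1 \Rightarrow m = 1$ gives the contradiction unless one is in characteristic dividing $m$.

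The main obstacle I anticipate is pinning down the forced combinatorial pattern of the map $(k,k') \mapsto \sigma(k,k')$: over $\R$ one has the linear order on the real line to organize the argument, but over $\C$ there is no betweenness, so one must instead extract the pattern purely from the incidence identities and the triple-point hypothesis, probably by an induction on $m$ that peels off one line $M_k$ at a time (mirroring the inductive ``Case 2.$u$'' structure of Theorem~\ref{Main1}). A secondary difficulty is ensuring that the reduction at the start — deleting lines through high-multiplicity points to reduce to the all-triples case — does not destroy full rank or introduce new simple points on $\ell$; this is handled by the observation already recorded in the excerpt preceding the conjecture, but it must be checked that supersolvability with the \emph{same} modular point $P$ is preserved, which follows because deleting lines not through $P$ cannot create a new point that $P$ fails to see. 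If the combinatorial pattern resists a clean description, a fallback is to phrase the whole configuration as a point on a moduli scheme cut out by the incidence equations and to show this scheme has no $\C$-points (e.g.\ by a resultant/Gröbner computation for small $m$ together with a degeneration argument), though a uniform proof along the cross-ratio lines above would be far more satisfying.
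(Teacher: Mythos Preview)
This statement is a \emph{conjecture} in the paper, not a theorem: the authors do not give a proof in general. What they actually establish is (i) a specific infinite family $\mathcal B(m)$ satisfying the hypotheses is never realizable over $\C$, via an elementary inductive identity among the $2\times2$ minors $a_ib_j-a_jb_i$ that forces $(m-1)(a_1b_2-a_2b_1)=0$; and (ii) for $m\le5$, every arrangement satisfying the hypotheses either is isomorphic to $\mathcal B(m)$ or contains a $\mathcal B(m-1)$ or $\mathcal B(m-2)$ as a subarrangement, by an exhaustive case analysis of the possible incidence patterns $(k,k')\mapsto\sigma(k,k')$. The general case is explicitly left open; the authors say only that they are ``tempted to ask'' whether every such arrangement contains some $\mathcal B$.

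Your proposal does not close this gap. The decisive unsupported step is the claim that the incidence map $(k,k')\mapsto\sigma(k,k')$ is ``forced (up to relabeling) into a convexity-type pattern''. The paper's own $m=5$ analysis already shows that several non-isomorphic patterns survive the obvious symmetries and must be eliminated one at a time; nothing in your outline provides a mechanism that replaces this case work for general $m$, and ``probably by an induction on $m$ that peels off one line'' is exactly what the authors could not carry out. Second, the cross-ratio/Menelaus step --- that a telescoping product equals $1$ while the incidence constraints simultaneously force it to be a nontrivial $m$-th root of unity --- is asserted with no computation and no reason why a primitive root should appear; by contrast, the paper's successful argument for $\mathcal B(m)$ is purely \emph{additive} (a linear relation among minors giving $m-1=0$ in the field), not multiplicative, so your heuristic does not even match the one case that is understood. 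As it stands, the proposal is a plausible research outline, but the central obstruction the authors identify --- controlling the combinatorics of $\sigma$ for arbitrary $m$ --- remains untouched.
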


Because $\ell$ does not pass through $P$, it must have exactly $m$ singularities on it: $P_1,\ldots,P_m$. The formula $\sum(m_{P_i}-1)=n-1$, together with $m_{P_i}=3$, gives that $n=2m+1$.

After a change of variables, we may assume that $P=[0,0,1]$ and $\ell=V(z)$.

Suppose $\ell_i=V(a_ix+b_iy), i=1,\ldots,m$ are the lines of $\A$ through $P$, with $a_ib_j-a_jb_i\neq 0$, for all $i\neq j$. Suppose each point $P_i$ is the intersection of $\ell_i$, $\ell$, and another line $\ell_i'$ of $\A$ not passing through $P$. Then we can assume $\ell_i'=V(a_ix+b_iy+z), i=1,\ldots,m$.

The supersolvable condition translates to the following: for $i\neq j$, the point of intersection of $\ell_i'$ and $\ell_j'$ must lie on a line through $P$. This is equivalent to the following determinantal condition: for any $1\leq i<j\leq m$, there exists $k(i,j)\in\{1,\ldots,m\}-\{i,j\}$ such that
$$\left|\begin{array}{lll}
a_i&b_i&1\\
a_j&b_j&1\\
a_{k(i,j)}&b_{k(i,j)}&0
\end{array}\right|=0.$$

\begin{exm}\label{exampleB} Let $\mathcal B(m), m\geq 3$ be the supersolvable line arrangement with the following additional dependencies: $(\ell_1',\ell_2',\ell_3), (\ell_1',\ell_3',\ell_4),\ldots,(\ell_1',\ell_{m-1}',\ell_m)$, $(\ell_1',\ell_m',\ell_2)$, and $(\ell_2',\ell_3',\ldots,\ell_m',\ell_1)$ (we use this notation to indicate that these lines are all concurrent). The picture for the matroid of $\mathcal{B}(5)$ is shown below. The line arrangement $\mathcal B(m)$ satisfies the conditions of Conjecture \ref{conj2}. We show that $\mathcal B(m)$ is not realizable over $\mathbb C$.

\begin{center}
\epsfig{file=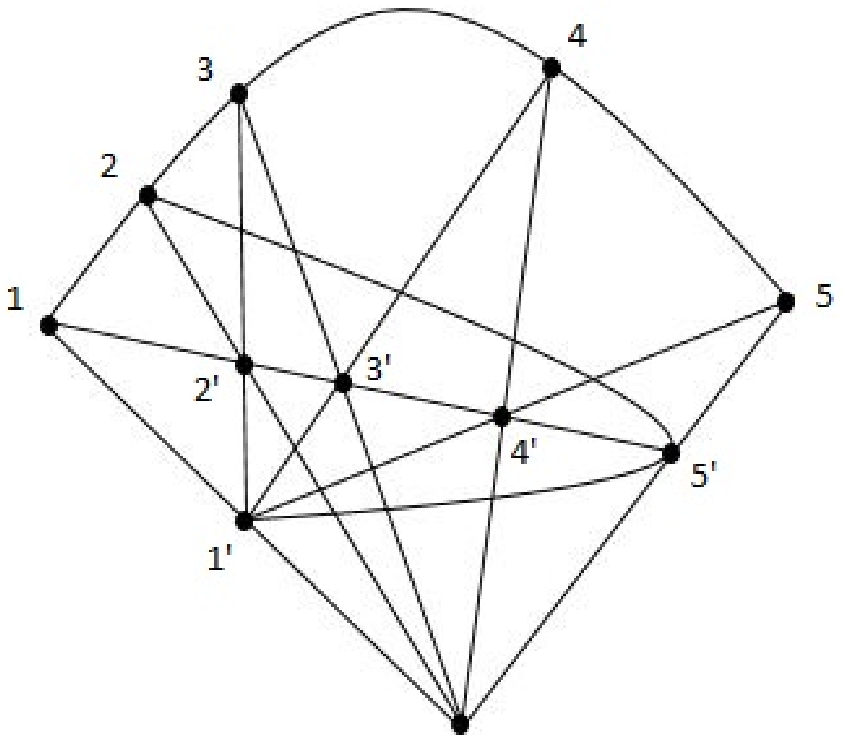,height=2.3in,width=2.3in}
\end{center}

\noindent {\bf Claim.} For any $i=0,\ldots,m-2$, we have $$a_{m-i}b_2-a_2b_{m-i}=(i+1)(a_1b_2-a_2b_1).$$

\noindent {\bf Proof of Claim.} We use induction on $i\geq 0$.

The dependency $(\ell_1',\ell_m',\ell_2)$ gives $a_mb_2-a_2b_m=a_1b_2-a_2b_1$. So the base case of induction, $i=0$, is shown.

Suppose $i>0$ and suppose $a_{m-i+1}b_2-a_2b_{m-i+1}=i(a_1b_2-a_2b_1)$. The dependency $(\ell_1',\ell_{m-i}',\ell_{m-i+1})$ gives $$a_{m-i}b_{m-i+1}-a_{m-i+1}b_{m-i}=a_1b_{m-i+1}-a_{m-i+1}b_1.$$ The dependency $(\ell_2',\ell_{m-i+1}',\ell_1)$ gives $$a_1b_{m-i+1}-a_{m-i+1}b_1 = a_1b_2-a_2b_1,$$ and the dependency $(\ell_2',\ell_{m-i}',\ell_{m-i+1}')$ gives $$a_{m-i}b_2-a_2b_{m-i}=(a_{m-i+1}b_2-a_2b_{m-i+1})+(a_{m-i}b_{m-i+1}-a_{m-i+1}b_{m-i}).$$ The induction hypotheses proves the claim.

\medskip

Making $i=m-2$ in the Claim, we obtain $$(m-1)(a_1b_2-a_2b_1)=0.$$ Since we work over $\mathbb C$, we obtain the contradiction $a_1b_2-a_2b_1=0$.
\end{exm}

Next we look at arrangements satisfying Conjecture \ref{conj2}, for $m=3,4,5$. It turns out that either these arrangements are isomorphic (from the matroid point of view) to $\mathcal B(m)$ or have a subarrangement isomorphic to $\mathcal B(m-1)$ or $\mathcal B(m-2)$. Hence they are not realizable over $\mathbb C$. We are tempted to ask if this happens for any value of $m$, because if that is the case Example \ref{exampleB} will prove Conjecture \ref{conj2}.

\subsection{Case $m=3$.} Then we must have the dependencies $(\ell_1',\ell_2',\ell_3), (\ell_1',\ell_3',\ell_2)$, and $(\ell_2',\ell_3',\ell_1)$. But this is exactly $\mathcal B(3)$. In fact, the matroid of this line arrangement is the Fano projective plane, and the proof of Example \ref{exampleB} has the same conclusion as \cite[Proposition 6.4.8(i)]{Ox}.

\subsection{Case $m=4$.} In general, if we do not want to reduce to the case $m-1$, we should impose also the condition that for every $\ell_i$ there are exist two $\ell_{\mu(i)}'$ and $\ell_{\delta(i)}'$ such that they all intersect at a point. With this in mind, and since we do not want to have $\mathcal B(3)$ as a subarrangement, for $m=4$, after some relabeling we have the following dependencies: $(\ell_1',\ell_2',\ell_3)$, $(\ell_2',\ell_3',\ell_1)$, and $(\ell_1',\ell_3',\ell_4)$. Suppose $\ell_2',\ell_3',\ell_4',\ell_1$ are not concurrent. Then we must have the dependency $(\ell_2',\ell_4',\ell_3)$, and hence all of these lines are concurrent with $\ell_1'$. So after some relabeling, we may assume the dependency $(\ell_2',\ell_3',\ell_4',\ell_1)$. Then we are left with the only possibility for the dependency $(\ell_1',\ell_4',\ell_2)$. This is exactly $\mathcal B(4)$.

\subsection{Case $m=5$.} In what follows, for the sake of brevity, we leave
out the words ``concurrent'' or ``dependency,'' instead using the brackets
notation introduced previously; as usual, $\neg$ stands for ``not.''

After relabeling we have $(\ell_1',\ell_2',\ell_3),(\ell_1',\ell_3',\ell_4),(\ell_2',\ell_3',\ell_1)$.

$\bullet$ Suppose $(\ell_2',\ell_3',\ell_4',\ell_1)$ (meaning these four lines are concurrent). Then, since we want to avoid constructing a $\mathcal B(4)$, we must have $(\ell_1',\ell_4',\ell_5)$.

If $(\ell_2',\ell_3',\ell_4',\ell_5',\ell_1)$, then $(\ell_1',\ell_5',\ell_2)$ is a must (see beginning of case $m=4$). Then we just constructed a $\mathcal B(5)$.

If $\neg(\ell_2',\ell_3',\ell_4',\ell_5',\ell_1)$, then $(\ell_2',\ell_5',\ell_i)$ with $i\neq 1,2,5$. We have the following subcases.

(A) Suppose $(\ell_2',\ell_5',\ell_3)$. Then, $(\ell_1',\ell_2',\ell_5',\ell_3)$. Then also $(\ell_3',\ell_5',\ell_4)$ or $(\ell_3',\ell_5',\ell_2)$.

(A.1) In the first case, we must have $(\ell_1',\ell_3',\ell_5',\ell_4)$ which contradicts $(\ell_1',\ell_2',\ell_5',\ell_3)$.

(A.2) In the second case, we must have $(\ell_4',\ell_5',\ell_i)$, with $i\neq 1,4,5$. If $(\ell_4',\ell_5',\ell_2)$, then $(\ell_3',\ell_4',\ell_5',\ell_2)$, contradicting $(\ell_3',\ell_4',\ell_1)$; and if $(\ell_4',\ell_5',\ell_3)$, then $(\ell_1',\ell_2',\ell_4',\ell_5',\ell_3)$, contradicting $(\ell_2',\ell_4',\ell_1)$.

(B) Suppose $(\ell_2',\ell_5',\ell_4)$. Then, $(\ell_1',\ell_5',\ell_i)$, with $i\neq 1,5$.

(B.1) If $(\ell_1',\ell_5',\ell_3)$, then $(\ell_1',\ell_2',\ell_5',\ell_3)$; contradiction with $(\ell_2',\ell_5',\ell_4)$.

(B.2) If $(\ell_1',\ell_5',\ell_4)$, then $(\ell_1',\ell_3',\ell_5',\ell_4)$, and then $(\ell_1',\ell_2',\ell_3',\ell_5',\ell_4)$; contradiction with $(\ell_2',\ell_3',\ell_1)$.

(B.3) If $(\ell_1',\ell_5',\ell_2)$, then $(\ell_3',\ell_5',\ell_j)$, with $j\neq 1,3,5$. If $(\ell_3',\ell_5',\ell_2)$, then $(\ell_1',\ell_3',\ell_5',\ell_2)$; contradiction with $(\ell_1',\ell_3',\ell_4)$. Otherwise, if $(\ell_3',\ell_5',\ell_4)$, then $(\ell_2',\ell_3',\ell_5',\ell_4)$; contradiction with $(\ell_2',\ell_3',\ell_1)$.

$\bullet$ Suppose $\neg(\ell_2',\ell_3',\ell_4',\ell_1)$. Then $(\ell_2',\ell_4',\ell_i)$, with $i\neq 1,2,4$. If $(\ell_2',\ell_4',\ell_3)$, then $(\ell_1',\ell_2',\ell_4',\ell_3)$. By relabeling $1\rightarrow 3, 2\rightarrow 2, 3\rightarrow 1,4\rightarrow 4,5\rightarrow 5$, we are in the situation of the previous bullet.

Suppose $(\ell_2',\ell_4',\ell_5)$. We must have $(\ell_3',\ell_4',\ell_i)$, with $i\neq 1,3,4$. We have the following cases.

(A') If $(\ell_3',\ell_4',\ell_5)$, then $(\ell_2',\ell_3',\ell_4',\ell_5)$; contradiction with $(\ell_2',\ell_3',\ell_1)$.

(B') Suppose $(\ell_3',\ell_4',\ell_2)$. We must have $(\ell_1',\ell_4',\ell_j)$, with $j\neq 1,4$.

(B'.1) If $(\ell_1',\ell_4',\ell_2)$, then $(\ell_1',\ell_3',\ell_4',\ell_2)$; contradiction with $(\ell_1',\ell_3',\ell_4)$.

(B'.2) If $(\ell_1',\ell_4',\ell_5)$, then $(\ell_1',\ell_2',\ell_4',\ell_5)$; contradiction with $(\ell_1',\ell_2',\ell_3)$.

(B'.3) If $(\ell_1',\ell_4',\ell_3)$, then $(\ell_1',\ell_2',\ell_4',\ell_3)$; contradiction with $(\ell_2',\ell_4',\ell_5)$.

\medskip

Everything put together gives that for $m=5$, the arrangement is isomorphic with $\mathcal B(5)$, or it has a $\mathcal B(4)$, or a $\mathcal B(3)$ subarrangement. In conclusion, the line arrangement is not realizable over $\mathbb C$.

\begin{cor} Conjecture \ref{conjecture} is true for any full rank supersolvable complex arrangement of $n$ lines, with $n\leq 12$.
\end{cor}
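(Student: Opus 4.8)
The plan is to reduce Conjecture \ref{conjecture} for $n \le 12$ to the case analysis just completed, using the reduction machinery developed after the statement of the conjecture. First I would recall the structure of the argument: by Theorem \ref{Main1}'s proof, everything is field-independent except Lemma \ref{NonmodularLines}, so it suffices to establish the complex analogue of Lemma \ref{NonmodularLines} for arrangements of at most $12$ lines. Suppose $\A \subset \Proj^2$ is a full rank supersolvable complex arrangement with $n \le 12$ lines, let $P$ be a modular point of maximum multiplicity $m \ge 3$ (using Lemma \ref{MaxModular}), and suppose for contradiction that some line $\ell \in \A$ with $P \notin \ell$ carries no simple point of $\A$.

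Next I would invoke the reduction observation stated immediately before Conjecture \ref{conj2}: whenever a point $Q_i \in \ell \cap Sing(\A)$ has $m(Q_i,\A) \ge 4$, deleting one line through $Q_i$ that is neither $\ell$ nor a line through $P$ yields a smaller supersolvable arrangement in which $\ell$ still has no simple point and $P$ is still a modular point of maximum multiplicity. Iterating, I reach a supersolvable arrangement in which every singularity on $\ell$ is a triple point; since $P \notin \ell$, the identity $\sum_{i=1}^{s}(m_{P_i}-1) = n'-1$ from the end of the Introduction forces $n' = 2m'+1$ where $m' = m(\ell)$ is the number of triple points on $\ell$ (which equals the multiplicity of $P$ in the reduced arrangement). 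So the reduced arrangement satisfies exactly the hypotheses of Conjecture \ref{conj2} with parameter $m'$, and since $n' \le n \le 12$ we have $2m'+1 \le 12$, i.e.\ $m' \le 5$.

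Then I would apply the case analyses of Sections 3.3, 3.4, and 3.5: for $m' = 3, 4, 5$ the classification shows that any such arrangement is matroid-isomorphic to $\mathcal{B}(m')$ or contains a subarrangement matroid-isomorphic to $\mathcal{B}(m'-1)$ or $\mathcal{B}(m'-2)$, and by Example \ref{exampleB} none of $\mathcal{B}(3), \mathcal{B}(4), \mathcal{B}(5)$ is realizable over $\C$. (A subarrangement of a complex-realizable arrangement is complex-realizable, so the presence of a non-realizable subarrangement already gives the contradiction.) This contradicts the existence of $\ell$, establishing the complex analogue of Lemma \ref{NonmodularLines} for $n \le 12$; feeding that into the otherwise field-independent proof of Theorem \ref{Main1} yields $\lvert Sing_2(\A) \rvert \ge n/2$.

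The main obstacle is purely bookkeeping rather than conceptual: one must be certain that the iterated line-deletions genuinely preserve supersolvability with the \emph{same} modular point of \emph{maximum} multiplicity (so that Lemma \ref{MaxModular} and the counting identity continue to apply), and that after reduction the resulting matroid really does fall under one of the three completed classifications — in particular that the ``generic position of the extra $\ell_i'$ lines'' hypotheses used tacitly at the start of the $m=4$ and $m=5$ analyses are automatically met once all non-simple points on $\ell$ have been made triple. Checking $m' \le 5$ covers all $n' \le 11$, and $n'=12$ would force $m'$ non-integral, so the bound $n \le 12$ is exactly what the case analysis reaches; no new case is needed.
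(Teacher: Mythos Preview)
Your proposal is correct and follows essentially the same approach as the paper: assume a line $\ell$ off the modular point with no simple points, reduce (via the deletion argument preceding Conjecture~\ref{conj2}) to the all-triple-points situation, bound the parameter by $5$, and invoke the $m=3,4,5$ case analyses together with Example~\ref{exampleB}. The only cosmetic difference is that the paper bounds $m\le 5$ directly on the original arrangement via $2m\le\sum(m_{P_i}-1)=n-1\le 11$ before reducing, whereas you reduce first and then bound $m'$; since the deletion removes only lines not through $P$, one has $m'=m$ and the two orderings are equivalent.
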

\begin{proof} Suppose there exists $\ell\in \A$, not passing through the modular point of max multiplicity $m$, that has no simple singularities. Suppose $P_1,\ldots,P_m$ are the singularities on $\ell$. Then $$2m\leq\sum_{i=1}^m(m_{P_i}-1)=n-1\leq 11,$$ gives $m\leq 5$. With what we discussed above, such an arrangement is not realizable over $\mathbb C$.
\end{proof}

\medskip

Conjecture \ref{conj2} is equivalent to the following conjecture with a commutative algebraic flavor. This is justified by the fact that the point of intersection of $\ell_i'$ and $\ell_j'$, $i\neq j$ must lie on a line $\ell_{k(i,j)}$.

\begin{conj}\label{conj3} Let $f_1,\ldots,f_m\in\mathbb C[x,y]$ be any $m$ linear forms with $\gcd(f_i,f_j)=1$, for all $i\neq j$. Then $$\prod_{i=1}^m f_i\notin \bigcap_{1\leq i<j\leq m}\langle f_i+z,f_j+z\rangle,$$ where $\langle f_i+z,f_j+z\rangle$ is the ideal of $\mathbb C[x,y,z]$ generated by $f_i+z$ and $f_j+z$.
\end{conj}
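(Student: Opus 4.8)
The plan is to show that the commutative-algebra statement is literally a restatement of the geometric Conjecture \ref{conj2}, and then to attack it directly via the same determinantal bookkeeping that powers Example \ref{exampleB}. First I would unwind the membership condition. Writing $f_i = a_i x + b_i y$, the hypothesis $\gcd(f_i,f_j)=1$ is exactly $a_i b_j - a_j b_i \neq 0$, so the $f_i$ are pairwise non-proportional linear forms. The ideal $\langle f_i + z, f_j + z\rangle \subset \mathbb{C}[x,y,z]$ is the (radical, prime) ideal of the line $\ell_i' \cap \ell_j'$'s defining pair; a homogeneous $F$ lies in $\bigcap_{i<j}\langle f_i+z, f_j+z\rangle$ iff $F$ vanishes at every point $\ell_i' \cap \ell_j'$ for $i \ne j$. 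Taking $F = \prod_{i=1}^m f_i = g/(\text{the vertical line, if present})$ — more precisely the product of the linear forms of the lines through $P=[0,0,1]$ other than the one at infinity — the statement ``$\prod f_i \in \bigcap \langle f_i + z, f_j + z\rangle$'' says precisely that each intersection point $\ell_i' \cap \ell_j'$ lies on one of the lines $\ell_k = V(f_k)$ through $P$. That is exactly the supersolvability/modularity condition displayed just before Example \ref{exampleB}, namely the existence for each $i<j$ of $k(i,j)$ with the $3\times 3$ determinant vanishing. So Conjecture \ref{conj3} holds iff no such configuration exists over $\mathbb{C}$, i.e.\ iff Conjecture \ref{conj2} holds; the two are equivalent as claimed, and it suffices to prove either one.

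Next I would try to prove the statement outright. The key data are the $\binom{m}{2}$ quantities $c_{ij} := a_i b_j - a_j b_i$, which are all nonzero by hypothesis and which satisfy the Plücker-type three-term relations $c_{ik} = c_{jk} + c_{ij}$ wait — more usefully, for any three indices one has the identity $c_{jk}\,c_{i\ell} - c_{ik}\,c_{j\ell} + c_{ij}\,c_{k\ell} = 0$ coming from the vanishing of a $4\times 4$ determinant with a repeated column, but the cleaner structural fact is that $(a_i,b_i)$ are $m$ vectors in $\mathbb{C}^2$, so any three of the $c$'s among indices $i,j,k$ are linearly dependent: $c_{jk} - c_{ik} + c_{ij} = 0$ after normalizing, or in homogeneous form there exist scalars so that $c_{ij}$ behaves like the $2\times 2$ minors of a $2\times m$ matrix. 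Now the modularity condition for the pair $(i,j)$ expands the determinant $\det\!\begin{pmatrix} a_i & b_i & 1\\ a_j & b_j & 1 \\ a_{k} & b_{k} & 0\end{pmatrix} = 0$ into $c_{ij} = c_{ik} + c_{kj} = c_{ik} - c_{jk}$, i.e.\ $a_k b_i - a_k b_j - b_k a_i + b_k a_j$ — let me instead record it as: the modular partner $k=k(i,j)$ is characterized by $c_{jk} - c_{ik} = c_{ij}$, that is, $c_{ik} - c_{jk} = -c_{ij}$. The plan is then to feed the combinatorial structure of which triples are concurrent (the matroid) into this relation and derive, by a chain of substitutions exactly as in the Claim of Example \ref{exampleB}, a forced equation of the shape $N \cdot c_{12} = 0$ for some nonzero integer $N$ (in the $\mathcal{B}(m)$ case $N = m-1$), contradicting $c_{12}\neq 0$ in characteristic zero.

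The main obstacle — and the reason this is stated as a conjecture — is that the combinatorial input is not fixed: a priori a supersolvable matroid satisfying the hypotheses of Conjecture \ref{conj2} need not be (a minor of) $\mathcal{B}(m)$, so one cannot simply run one chain of substitutions. The casework for $m = 3,4,5$ carried out in the preceding subsections shows that in low rank every such matroid either \emph{is} $\mathcal{B}(m)$ or contains $\mathcal{B}(m-1)$ or $\mathcal{B}(m-2)$ as a subarrangement, and each of those is already non-realizable by Example \ref{exampleB}; so the real content of a full proof is a structural classification: \emph{every} supersolvable matroid with a line $\ell$ off the modular point carrying only triple points, necessarily contains a $\mathcal{B}(k)$ minor for some $k\ge 3$. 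I would approach that by induction on $m$, using the operation noted after Conjecture \ref{conj2} (deleting a line through a point of multiplicity $\ge 4$ preserves supersolvability and the bad line $\ell$) to reduce to the case where every point off $P$ has multiplicity exactly $3$, and then analyzing the ``concurrence graph'' on the lines $\ell_1',\ldots,\ell_m'$ — each $\ell_i'$ meets $\ell$ at $P_i$, and the remaining $m-1$ points on $\ell_i'$ must pair it with the other $\ell_j'$'s and with lines through $P$; tracking these incidences should force a cyclic dependency pattern isomorphic to some $\mathcal{B}(k)$. If that structural claim holds, the determinant computation of Example \ref{exampleB} finishes everything; if it fails, one would instead need a rank-type argument showing the full system of bilinear equations $c_{i,k(i,j)} - c_{j,k(i,j)} = -c_{ij}$ over all pairs has only the degenerate solution, which is the harder, purely algebraic route.
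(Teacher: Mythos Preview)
This statement is a \emph{conjecture}; the paper does not prove it. What the paper does supply is a one-sentence justification that Conjecture~\ref{conj3} is equivalent to Conjecture~\ref{conj2} (``This is justified by the fact that the point of intersection of $\ell_i'$ and $\ell_j'$, $i\neq j$ must lie on a line $\ell_{k(i,j)}$''). Your first paragraph unpacks exactly this equivalence, and correctly: the ideal $\langle f_i+z,f_j+z\rangle$ is the homogeneous prime ideal of the point $\ell_i'\cap\ell_j'$, so $\prod_k f_k$ lies in it iff that intersection point sits on some $\ell_k$; and one checks $k\notin\{i,j\}$ because otherwise $z$ would vanish there, forcing $P_i=P_j$ and hence $a_ib_j-a_jb_i=0$. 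So on the only point the paper actually argues, your proposal agrees with and fleshes out the paper's reasoning.

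Everything after your first paragraph is an attack on the open problem itself, and you are transparent that it is not a proof. One small correction in that sketch: your expansion of the $3\times3$ determinant is off. Expanding along the third column gives
\[
\det\begin{pmatrix} a_i & b_i & 1\\ a_j & b_j & 1\\ a_k & b_k & 0\end{pmatrix}
= (a_jb_k-a_kb_j)-(a_ib_k-a_kb_i)=c_{jk}-c_{ik},
\]
so the concurrency condition is $c_{ik}=c_{jk}$, not $c_{jk}-c_{ik}=c_{ij}$. This is exactly what is used in the Claim of Example~\ref{exampleB} (e.g.\ $(\ell_1',\ell_m',\ell_2)$ yields $c_{12}=c_{m2}$). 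The structural strategy you outline --- reduce to all-triple points on $\ell$, then show every such matroid contains a $\mathcal B(k)$ minor and invoke Example~\ref{exampleB} --- is precisely the line of attack the paper pursues case-by-case for $m\le 5$ and leaves open in general; you have not closed the gap, but you have correctly identified where it lies.
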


\vskip .1in

\noindent{\bf Acknowledgement} We are grateful to Prof. G\"{u}nter Ziegler for many enlightening discussions regarding the complex version of Dirac-Motzkin.

\bibliographystyle{amsalpha}

\end{document}